\newif\ifpersonal
\theoremstyle{plain}
\newtheorem{proposition}{Proposition}[section]
\newtheorem{lemma}[proposition]{Lemma}
\newtheorem{theorem}[proposition]{Theorem}
\newtheorem*{theorem*}{Theorem}
\newtheorem{corollary}[proposition]{Corollary}
\newtheorem*{corollary*}{Corollary}
\newtheorem{question}[proposition]{Question}
\newtheorem*{question*}{Question}
\theoremstyle{definition}
\newtheorem{definition}[proposition]{Definition}
\newtheorem{remark}[proposition]{Remark}
\newtheorem{example}[proposition]{Example}
\newcommand*{\personal}[1]{\textcolor[rgb]{.2,.2,1}{\tiny{(Personal: #1)}}}
\newcommand*{\todo}[1]{\textcolor{red}{(Todo: #1)}}
\newcommand*{\personal}[1]{\ignorespaces}
\newcommand*{\todo}[1]{\ignorespaces}
				\string\usetikzlibrary{decorations.markings} to use arrows with markings}{}}{}%
\newcommand{\rH}{\mathrm H}
\newcommand{\fS}{\mathfrak S}
\newcommand{\fn}{\mathfrak n}
\newcommand{\ft}{\mathfrak t}
\newcommand{\cK}{\mathcal K}
\newcommand{\cO}{\mathcal O}
\newcommand{\cU}{\mathcal U}
\newcommand{\bZ}{\mathbb{Z}}							  				
\newcommand{\bN}{\mathbb{N}}							  				
\newcommand{\bP}{\mathbb{P}}
\newcommand{\bC}{\mathbb{C}}
\newcommand{\bR}{\mathbb{R}}
\providecommand{\bP}{\mathbb{P}}
\newcommand{\bG}{\mathbb{G}}
\newcommand{\vPhi}{\check{\Phi}}
\newcommand{\vDelta}{\check{\Delta}}
\newcommand{\vmu}{\check{\mu}}
\newcommand{\valpha}{\check{\alpha}}
\newcommand{\Rep}{\operatorname{Rep}}
\newcommand{\oset}[3][0.2ex]{%
	\mathrel{\mathop{#3}\limits^{
			\vbox to#1{\kern-2\ex@
				\hbox{$\scriptstyle#2$}\vss}}}}
\newcommand{\<}{\langle}
\renewcommand{\>}{\rangle}
\newcommand{\acts}{\curvearrowright}
\tikzset{
  closed/.style = {decoration = {markings, mark = at position 0.5 with { \node[transform shape, xscale = .8, yscale=.4] {/}; } }, postaction = {decorate} },
  open/.style = {decoration = {markings, mark = at position 0.5 with { \node[transform shape, scale = .7] {$\circ$}; } }, postaction = {decorate} }
}
\newcommand{\GL}{\operatorname{GL}}
\newcommand{\SL}{\operatorname{SL}}
\newcommand{\Sp}{\operatorname{Sp}}
\newcommand{\Spin}{\operatorname{Spin}}
\providecommand{\fsl}{\mathfrak{sl}}
\newcommand{\fg}{\mathfrak{g}}
\providecommand{\fb}{\mathfrak{b}}
\newcommand{\Lie}{\operatorname{Lie}}
\DeclareMathOperator{\Hom}{Hom}
\DeclareMathOperator{\End}{End}
\DeclareMathOperator{\Sym}{Sym}
\DeclareMathOperator{\id}{\mathsf{id}}
\DeclareMathOperator{\Ima}{Im}
\DeclareMathOperator{\coker}{coker}
\DeclareMathOperator{\Conv}{Conv}
\DeclareMathOperator{\Stab}{Stab}
\DeclareMathOperator{\height}{ht}
\newcommand{\ind}{\operatorname{ind}}
\newcommand{\res}{\operatorname{res}}
\newcommand{\prim}{\mathrm{prim}}
\let\save@mathaccent\mathaccent
\newcommand*\if@single[3]{%
	\setbox0\hbox{${\mathaccent"0362{#1}}^H$}%
	\setbox2\hbox{${\mathaccent"0362{\kern0pt#1}}^H$}%
	\ifdim\ht0=\ht2 #3\else #2\fi
}
\newcommand*\rel@kern[1]{\kern#1\dimexpr\macc@kerna}
\newcommand*\widebar[1]{\@ifnextchar^{{\wide@bar{#1}{0}}}{\wide@bar{#1}{1}}}
\newcommand*\wide@bar[2]{\if@single{#1}{\wide@bar@{#1}{#2}{1}}{\wide@bar@{#1}{#2}{2}}}
\newcommand*\wide@bar@[3]{%
	\begingroup
	\def\mathaccent##1##2{%
		\let\mathaccent\save@mathaccent
		\if#32 \let\macc@nucleus\first@char \fi
		\setbox\z@\hbox{$\macc@style{\macc@nucleus}_{}$}%
		\setbox\tw@\hbox{$\macc@style{\macc@nucleus}{}_{}$}%
		\dimen@\wd\tw@
		\advance\dimen@-\wd\z@
		\divide\dimen@ 3
		\@tempdima\wd\tw@
		\advance\@tempdima-\scriptspace
		\divide\@tempdima 10
		\advance\dimen@-\@tempdima
		\ifdim\dimen@>\z@ \dimen@0pt\fi
		\rel@kern{0.6}\kern-\dimen@
		\if#31
		\overline{\rel@kern{-0.6}\kern\dimen@\macc@nucleus\rel@kern{0.4}\kern\dimen@}%
		\advance\dimen@0.4\dimexpr\macc@kerna
		\let\final@kern#2%
		\ifdim\dimen@<\z@ \let\final@kern1\fi
		\if\final@kern1 \kern-\dimen@\fi
		\else
		\overline{\rel@kern{-0.6}\kern\dimen@#1}%
		\fi
	}%
	\macc@depth\@ne
	\let\math@bgroup\@empty \let\math@egroup\macc@set@skewchar
	zen@everymath{\mathgroup\macc@group\relax}%
	\macc@set@skewchar\relax
	\let\mathaccentV\macc@nested@a
	\if#31
	\macc@nested@a\relax111{#1}%
	\else
	\def\gobble@till@marker##1\endmarker{}%
	\futurelet\first@char\gobble@till@marker#1\endmarker
	\ifcat\noexpand\first@char A\else
	\def\first@char{}%
	\fi
	\macc@nested@a\relax111{\first@char}%
	\fi
	\endgroup
}
\newcommand{\eef}{\text{ if }}
\newcounter{dragos}
\newcounter{daniele}
\newcounter{ada}
\newcommand{\Addresses}{{
		\bigskip
		\footnotesize
		A.~Boralevi, Politecnico di Torino, Dipartimento di Scienze Matematiche ``G. L. Lagrange'', 
		Corso Duca degli Abruzzi 24, 10129 Torino, Italy 
		\\ \nopagebreak
		\texttt{ada.boralevi@polito.it}

		\medskip
		\noindent D.~Faenzi, 
                Université Bourgogne Europe, CNRS, IMB UMR 5584,
                F-21000 Dijon, France 
                \\ \nopagebreak
		\texttt{daniele.faenzi@ube.fr}

		\medskip
		\noindent D. Fr\u{a}\c{t}il\u{a}, \textsc{Université de Strasbourg, IRMA, 7 rue René Descartes, 67000 Strasbourg, France}\\\nopagebreak
		\texttt{fratila@math.unistra.fr}
}}
\numberwithin{equation}{section}
\title{Equivariant spaces of matrices of constant corank one}
\author{Ada Boralevi, Daniele Faenzi, Drago\c s Fr\u a\c til\u a}
\subjclass{14J60, 14F05, 15A03, 20G05}
\keywords{Spaces of matrices of constant rank. Equivariant maps of
  homogeneous bundles.}
\begin{document}
	\maketitle
	\begin{abstract}
		We study spaces of matrices coming from irreducible representations of reductive groups over an algebraically closed field of characteristic zero and we completely classify those of constant corank 1.
		In particular, we recover the examples coming from symmetric forms discovered in \cite{BFL2}.
	\end{abstract}

\section{Introduction}

A space of matrices is simply a vector subspace $V$ of the space $\Hom(V_1,V_2)$ of linear maps between two vector spaces $V_1$ and $V_2$. $V$ is said to be of constant rank if all its nonzero elements have the same rank. 
By choosing bases, a space of matrices can be represented by a ``linear matrix'', that is, a matrix whose coefficients are linear forms.
Geometrically it can be represented as a morphism of vector bundles
\begin{equation}\label{eq:intro tildephi}
\tilde\varphi\colon V_1 \otimes \cO_{\bP(V)} \to V_2 \otimes \cO_{\bP(V)}(1).
\end{equation}
The space of matrices has constant rank if and only if the rank of $\tilde\varphi$ is constant, or, equivalently, the cokernel of $\tilde\varphi$ is a vector bundle.
It follows that to a space of matrices of constant rank one can associate several vector bundles on the projective space $\bP(V)$: the kernel, the image and the cokernel of $\tilde\varphi$.

\smallskip

The study of these objects can take different flavors, from
determining an optimal upper bound on the dimension of $V$, to
establishing relations among the possible values of $\dim(V)$,
$\dim(V_i)$, and the rank, to reconstructing the space of matrices
from vector bundles on the projective space, as well as addressing
classification problems. In all its different embodiments, it is a
longstanding problem, tracing back to the work of Kronecker and
Weierstrass. Similarly, the connection to sheaves and vector
bundles---and, consequently, the application of techniques from
projective algebraic geometry---is classical, rooted in the works of
Sylvester \cite{sylvester1986dimension}, Westwick
\cite{westwick-1,westwick-2}, Eisenbud--Harris
\cite{eisenbud1988vector}, Atkinson, Lloyd
\cite{atkinson1983primitive,atkinson-lloyd1981primitive}. Over the
past two decades, spaces of matrices of constant rank have garnered
renewed interest, driven by new ideas and tools emerging from derived
categories  \cite{BFM-instantons}, commutative/homological algebra
\cite{BFL-truncated, simone-rosa},  representation theory \cite{landsberg-manivel,BFL2} or vector bundles on projective spaces \cite{Manivel_Mezzetti,Manivel-Roig-vb}.

\smallskip

Out of the many open questions still standing, in this paper we focus on the construction of examples of spaces that are moreover equivariant under the action of a reductive group. 
Namely, one could take the vector spaces $V$ and $V_i$ to be (irreducible) representations of a given reductive group $G$, and consider a $G$-equivariant inclusion of representations 
\begin{equation}\label{eq:intro phi}
\varphi\colon V \to \Hom(V_1,V_2).
\end{equation}

One of the first instances of this setting appeared in the work \cite{Faenzi07} which led to the classification of all $\SL_2$-equivariant instanton bundles on $\bP^3$.
This approach was later generalized in \cite{BFL2}, where a morphism of type $\varphi$ as above was used to build $\SL_n$-equivariant slope stable vector bundles on the projective space of symmetric forms $\bP(S^d\bC^{n+1})$.

\smallskip

It was observed in \cite{landsberg-manivel} that a space of matrices parameterized by the natural representation of the general linear or symplectic group is always of constant rank (because the action on $\bP(V)$ is transitive and the map $\tilde\varphi$ from \eqref{eq:intro tildephi} is equivariant). 
In loc.cit. the authors exhibit and study many examples of interesting vector bundles on the projective space coming from this observation.

That led them to formulate the following question (see \cite[Question 4]{landsberg-manivel}):  for which irreducible representations $V$, $V_1$, $V_2$ with $V\subset\Hom(V_1,V_2)$ is the resulting space of matrices of constant rank?
It is easy to see that if $V$ is the biggest summand then this is not true.
They suggested that the answer might be positive if $V$ is the smallest summand.
It turns out that this hope is way too optimistic: already for $\SL_2$ and $V$ of dimension bigger than 3 it fails because generically the map $\varphi$ from \eqref{eq:intro phi} is injective, so of maximal rank. 
More generally, one can show that the $\GL(V)$-equivariant space of matrices 
\[\Lambda^kV\to\Hom(\Lambda^rV,\Lambda^{r+k}V)\]
is not of constant rank as soon as $k\ge 2$, despite $\Lambda^kV$
being the smallest summand.
We refer to \cref{P:Lambda not ct rank} for some
details.

\smallskip

It would be very interesting to have a precise conjecture identifying which summands are expected to have constant rank, see \cref{section:concluding remarks} for some more on this.

The examples found in \cite{BFL2} both for $\SL_2$, and more generally for $\SL_n$, focus on the ``second smallest'' summand in the Hom space. Even for this summand to have constant rank, a certain divisibility condition involving the highest weights must be satisfied. The examples we present in this paper also involve this ``second smallest'' summand. 

\smallskip

Our main result gives a classification of all the equivariant spaces of matrices $V\hookrightarrow\Hom(V_1,V_2)$ of constant corank 1 that are associated to irreducible $G$-modules $V$, $V_1$, $V_2$, where $G$ is any reductive group over an algebraically closed field of characteristic zero. 
In detail, we give a necessary and sufficient condition for the (highest weights of the) irreducible modules to yield a linear matrix whose cokernel (see \eqref{eq:intro tildephi}) is a line bundle, and therefore a space of matrices of constant corank 1.

\begin{theorem}\label{T:mainthm-intro}
	Let $V(\nu)$, $V(\mu)$, $V(\lambda)$ be irreducible representations of a reductive group $G$, with $\dim(V(\mu))\le \dim(V(\lambda))$, and suppose that there is an inclusion of representations
	\[ \varphi\colon V(\nu)\to \Hom(V(\mu),V(\lambda)).\]
	Then $\varphi$ is of constant corank 1 if and only if there exists a simple root $\alpha_i$ such that
	\begin{enumerate}
		\item $\lambda = \mu+\nu-\alpha_i$,
		\item $\mu\in \bN\nu$, i.e, $\mu$ is a multiple of $\nu$,
		\item $\nu\in \bN\omega_i$, i.e., $\nu$ is a multiple of $\omega_i$,
	\end{enumerate}
	where $\omega_i$ is the fundamental weight corresponding to $\alpha_i$.
\end{theorem}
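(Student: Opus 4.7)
The inclusion $\varphi$ corresponds, by the canonical isomorphism $\Hom(V(\mu),V(\lambda))\cong V(\mu)^*\otimes V(\lambda)$, to a $G$-equivariant bilinear map $\psi\colon V(\nu)\otimes V(\mu)\to V(\lambda)$; the fiber of $\tilde\varphi$ at $[v]\in\bP(V(\nu))$ is then the linear map $\psi_v\colon V(\mu)\to V(\lambda)$ given by $w\mapsto \psi(v\otimes w)$, and constant corank $1$ amounts to $\dim\coker\psi_v=1$ for every nonzero $v$. By $G$-equivariance this corank is constant on $G$-orbits of $\bP(V(\nu))$, and by upper semi-continuity it attains its maximum on the unique closed orbit $G\cdot[v_\nu]\cong G/P_\nu$, where $P_\nu$ is the parabolic stabilizer of the line $\bC v_\nu$. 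So the plan is to split the problem into (a) $\dim\coker\psi_{v_\nu}=1$, which controls the closed orbit and bounds the corank above by $1$ everywhere, and (b) $\dim\coker\psi_v\ge 1$ at every nonzero $v$, the complementary lower bound; together these give constant corank $1$.

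For the direction $(\Rightarrow)$, the key observation is that $P_\nu$ acts on $\bC v_\nu$ through a character $\chi_\nu$, so $\psi_{v_\nu}$ is $P_\nu$-equivariant up to the twist by $\chi_\nu^{-1}$; in particular $\Image(\psi_{v_\nu})$ is a $P_\nu$-submodule of $V(\lambda)$, hence $U^+$-stable for the unipotent radical $U^+$ of a Borel $B\subset P_\nu$. An irreducible $G$-module $V(\lambda)$ admits a unique $U^+$-stable hyperplane, namely the sum $H_\lambda$ of all weight spaces other than the lowest one $V(\lambda)_{w_0\lambda}$, so (a) is equivalent to $\Image(\psi_{v_\nu})=H_\lambda$. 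Since this image lies in the sum of the weight spaces $V(\lambda)_{\nu+\eta}$ for $\eta$ a weight of $V(\mu)$, the equality $\Image(\psi_{v_\nu})=H_\lambda$ is a combinatorial condition relating the weights of $V(\nu)$, $V(\mu)$ and $V(\lambda)$. Examining it at the highest weight $\lambda$ forces $\lambda-\nu$ to be a weight of $V(\mu)$, and combined with Schur's lemma (which ensures $V(\lambda)$ occurs in $V(\nu)\otimes V(\mu)$ since $\varphi\ne 0$), this pins down $\lambda=\mu+\nu-\alpha_i$ for some simple root $\alpha_i$, giving (1). The requirement that the $1$-dimensional cokernel define an equivariant line bundle on $G/P_\nu$ further constrains the Levi action of $P_\nu$ and forces $\nu\in\bN\omega_i$, giving (3); matching multiplicities of weight spaces across all of $H_\lambda$ then yields $\mu\in\bN\nu$, i.e.\ (2).

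For the direction $(\Leftarrow)$, assume (1)--(3), so $\nu=k\omega_i$, $\mu=ck\omega_i$ and $\lambda=(c+1)k\omega_i-\alpha_i$ for some $c,k\in\bN$. The weight analysis above reverses: under these hypotheses $H_\lambda$ is exactly the sum of weight spaces $V(\lambda)_{\nu+\eta}$ for $\eta$ a weight of $V(\mu)$, and the corresponding matrix coefficients of $\psi_{v_\nu}$ are nonzero, so $\Image(\psi_{v_\nu})=H_\lambda$ and (a) holds. For (b), conditions (2) and (3) imply that $\psi$ is controlled by the $\SL_2$-triple $(e_i,h_i,f_i)$ attached to $\alpha_i$; exploiting this structure, one exhibits explicitly, at every $v\ne 0$, a nonzero element in $\coker\psi_v$ coming from the lowest-weight side of $V(\lambda)$, thereby showing that $\psi_v$ is never surjective. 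Together with (a) this yields constant corank $1$.

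The main obstacle is step (b) in the $(\Leftarrow)$ direction: although (a) together with semi-continuity already bounds the corank above by $1$ everywhere, producing the uniform lower bound $\coker\psi_v\ne 0$ at every $v$ requires a global argument. The $G$-orbit stratification of $\bP(V(\nu))$ can be complicated when $G/P_i$ is not minuscule, and the proof must use conditions (2)--(3) essentially to reduce the verification to a finite combinatorial check on the $\alpha_i$-graded structure of the three representations.
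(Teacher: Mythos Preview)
Your proposal contains a fundamental error at the very first step. You assert that ``constant corank $1$ amounts to $\dim\coker\psi_v=1$ for every nonzero $v$'', with $\psi_v\colon V(\mu)\to V(\lambda)$. But under the hypothesis $\dim V(\mu)\le\dim V(\lambda)$, corank $1$ means $\operatorname{rank}\psi_v=\dim V(\mu)-1$, so $\dim\ker\psi_v=1$ while $\dim\coker\psi_v=\dim V(\lambda)-\dim V(\mu)+1$, which is typically larger than $1$. (For $\SL_2$ with $\nu=e\omega_1$, $\mu=de\omega_1$, $\lambda=(d+1)e\omega_1-\alpha_1$, one computes $\dim V(\lambda)-\dim V(\mu)=e-2$, which is arbitrary.) Consequently your claim that $\operatorname{Im}(\psi_{v_\nu})$ is the $B$-stable hyperplane $H_\lambda\subset V(\lambda)$ is false, and the entire weight analysis in $(\Rightarrow)$ collapses. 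The paper avoids this by first \emph{transposing} to $\varphi(u)\colon V(\lambda)^*\to V(\mu)^*$; now the target is the smaller space, the cokernel genuinely has dimension $1$, and one identifies $\operatorname{Im}(\varphi')=V(\mu)^*_{\succ -\mu}$. From this the paper reads off $\lambda=\mu+\nu-\alpha_i$ and $\mu\in\bN\omega_i$ directly. Your derivation of condition (2), ``matching multiplicities of weight spaces across all of $H_\lambda$ then yields $\mu\in\bN\nu$'', is also not correct as stated and is not how the paper proceeds: the paper observes instead that the cokernel of $\tilde\varphi$ is a $G$-equivariant line bundle on $\bP(V(\nu))$, hence some $\cO(n)$, and comparing $T$-weights at the fixed point $[v_\nu]$ forces $-\mu\in\bZ\nu$.

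For the converse $(\Leftarrow)$, your step (b) is the crux and you do not actually prove it; ``one exhibits explicitly, at every $v\ne 0$, a nonzero element in $\coker\psi_v$ coming from the lowest-weight side'' is a hope, not an argument, and you yourself flag it as the main obstacle. The paper's method here is quite different and global: it constructs the (unique, surjective) equivariant map $\psi\colon V(d\nu)^*\otimes\cO_{\bP(V(\nu))}\to\cO_{\bP(V(\nu))}(d)$ and shows that the composite $\psi\circ\phi$ vanishes, by proving the representation-theoretic fact $\Hom_G\bigl(V((d+1)\nu-\alpha_i),\,\Sym^{d+1}V(\nu)\bigr)=0$. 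This gives corank $\ge 1$ at \emph{every} point in one stroke, with no orbit stratification needed; the upper bound then comes from the closed-orbit computation as you outline.
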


\begin{remark}
	The assumption $\dim(V(\mu))\le \dim (V(\lambda))$ is for ``aesthetic reasons''.
	If it is not satisfied, then one can take the transpose and use the isomorphism $V(\lambda)^* \simeq V(-w_0\lambda)$, where $w_0$ is the longest element of the Weyl group, to fall back into the setting of the theorem.
\end{remark}

The proof of Theorem \ref{T:mainthm-intro} occupies the entire Section
\ref{section: main}. The direct implication is fairly straightforward and follows naturally from the weight structure of representations.
The converse, however, is more intricate, and hinges on two key aspects. First, we need to restrict to the closed orbit and inspect the weight structure of representations, which helps us infer the form of the global cokernel. Second, armed with this knowledge we employ additional tools from representation theory to construct the desired space of matrices.

\smallskip

It is worth noting that in the case of $G=\SL_n$ and $i=1$, our construction recovers that of  \cite{BFL2}. Consequently, we also address an implicit question in \cite{BFL2}: whether the specific summand in the decomposition used to construct constant rank matrices is the only possible choice, or if alternative summands are also of constant rank.

\smallskip

\subsection*{Organization of the paper}
We start by recalling in Section
\ref{section: recollection} a series of notions and results regarding representation theory of reductive groups.
Then, in \Cref{section: main} we present our main result and its proof, which is built on a series of subsequent lemmas that we deem interesting on their own right. 
Finally, in \cref{section:concluding remarks} we conclude with some remarks and further questions.

\subsection*{Acknowledgments} A.B. is member of GNSAGA-INdAM (Italy). This study was carried out within the `0-Dimensional Schemes, Tensor Theory, and Applications' project 2022E2Z4AK--funded by European Union--Next Generation EU  within the PRIN 2022 program (D.D. 104 - 02/02/2022 Ministero dell'Universit\`a e della Ricerca).
This research was also supported by the grants DAG-ARTS ANR-24-CE40-4098 of Agence National de la Recherche and by the PNRR Grant CF 44/14.11.2022.
D.F. was partially supported by FanoHK ANR-20-CE40-0023, SupToPhAG/EIPHI
ANR-17-EURE-0002, Bridges
ANR-21-CE40-0017 and "CAPES-COFECUB" programme Ma 1017/24.

\section{Recollection on representations of reductive groups}
\label{section: recollection}

We work over an algebraically closed field of characteristic zero that
we denote by $\bC$. For a finite dimensional vector space $V$ we
denote by $\bP(V)$ the projective space parameterizing lines through
the origin in $V$.

\subsection{}\label{ss:conventions}
Let $G$ be a reductive group and fix $T\subset B\subset G$ a maximal torus and a Borel subgroup.
For simplicity, we only consider the case when the group $G$ is simple and simply connected. 
Actually, this covers the most general case. First, one can always consider the derived subgroup and restrict the representations and no crucial information is lost (irreducibility is preserved and the decomposition of tensor products is the same).
Second, a representation of a semisimple group is also a representation of any of its covering groups.
Lastly, an irreducible representation of a product of simple groups factors through one of the simple groups.

From now on we assume $G$ to be simple and simply connected (e.g., $\SL_n$, $\Sp_{2n}$, $\Spin_n$, $G_2$, etc).
This assumption ensures that the fundamental weights belong to the character lattice.

\subsection{Root system}
We have the lattice of characters $X^*(T):=\Hom(T,\bG_m)$ and the lattice of cocharacters $X_*(T) = \Hom(\bG_m,T)$. We denote the root system by $\Phi$, the coroot system by $\check{\Phi}$, the positive (co)roots (with respect to $B$) by $\Phi^+$ (respectively $\vPhi^+$).
The simple (co)roots are denoted by $\Delta\subset \Phi^+$ respectively $\vDelta\subset\vPhi^+$.

If we write $\fg$ for the Lie algebra of $G$, then the root decomposition of $\fg$ is
\[\fg = \bigoplus_{\alpha\in\Phi}\fg_\alpha\]
where $T$ acts on $\fg_\alpha$ by the character $\alpha$ and every $\fg_\alpha$ is of dimension one.
The Lie algebra of the Borel subgroup is
\[\fb=\Lie(B) = \ft\oplus \oplus_{\alpha\in\Phi^+}\fg_\alpha \]
where $\ft=\Lie(T)$.
We write $N$ for the unipotent radical of $B$. From the above description we have
\[\Lie(N) = \bigoplus_{\alpha\in\Phi^+}\fg_\alpha.\]

The positive roots induce a partial order on the group of characters:
\begin{align}\label{E:partial order characters}
\lambda\preceq\mu \text{ if }\mu-\lambda\in\bN\Delta=\bN\Phi^+.	
\end{align} 

We have a non-degenerate pairing
\[\<-,-\>\colon X^*(T)\otimes X_*(T)\to \bZ=\Hom(\bG_m,\bG_m) \quad \<\lambda,\vmu\>:=\lambda\circ\vmu\]
which allows us to define the dominant characters by
\[ X^*(T)^+:=\{\lambda\in X^*\mid \<\lambda,\valpha\>\ge0,\,\text{ for all positive coroots }\valpha\in\vPhi^+ \}\]

Let us enumerate the simple roots as $\Delta = \{\alpha_i\mid i\in I\}$ and similarly for the simple coroots $\vDelta = \{\valpha_i\mid i\in I\}$.
The fundamental weights are denoted by $\omega_i$ and are characterized by 
\[\<\omega_i, \valpha_j\> = \delta_{i,j} \text{ for all } i,j\in I.\]

\subsubsection{} The Weyl group is defined to be $W:=N_G(T)/T$. It has a structure of Coxeter group determined by the choice of the Borel $B$ (hence of simple roots). The longest element in $W$ is denoted by $w_0$.
It is the unique element $w$ in $W$ with the property $w\Phi^+=\Phi^-$. This implies also that $w_0\vPhi^+=\vPhi^-$ and therefore $w_0$ sends a dominant character to an anti-dominant character: if $\lambda$ is dominant then $-w_0\lambda$ is dominant.

\subsection{Representations}
We need to collect some basic facts about representations of reductive groups.
We will consider representations of $G$ on finite dimensional $\bC$-vector spaces. Sometimes we will refer to them as $G$-modules.
The category of all such representations is denoted by $\Rep(G)$ and
similarly $\Rep(B)$ for representations of $B$. For general results, one can consult, for example, the books \cite{jantzen2003representations,humphreys2012introduction,fulton2013representation}.

Since the category of finite dimensional representations of a semisimple simply connected group is equivalent to the category of finite dimensional representations of its Lie algebra:
\[ \Rep^{\text{fd}}(G)\simeq \Rep^{\text{fd}}(\Lie(G))\]
we allow ourselves to use the point of view that is more convenient to the given context.

The representation theory of a torus is particularly simple: any representation is a direct sum of characters:
\[ V = \bigoplus_{\lambda\in X^*(T)} V_\lambda \]
where, for a $T$-representation $V$, we denoted by $V_\lambda$ the $\lambda$-weight space: the subspace on which $T$ acts through the character $\lambda$. 
Even more concretely, if we note by $\bC(\lambda)$ the vector space $\bC$ with the representation of $T$ given by the character $\lambda$: $t\cdot v = \lambda(t)v$ then the above $V_\lambda$ is isomorphic to $\bC(\lambda)^{\oplus \dim(V_\lambda)}$.
In other words the category $\Rep(T)$ is equivalent to the category of $X^*(T)$-graded vector spaces.

Since we clearly have $\bC(\lambda)\otimes\bC(\mu) = \bC(\lambda+\mu)$, there is an easy Künneth-like decomposition for representations of $T$:
\begin{align}\label{e:kunneth for reps T}
(V\otimes W)_\lambda = \bigoplus_{\mu_1+\mu_2 = \lambda}V_{\mu_1}\otimes W_{\mu_2}.
\end{align}

Any representation of $G$ (or of $B$) can be restricted to a representation of the maximal torus $T$ and, by the previous discussion, becomes a direct sum of characters of $T$.
Therefore any $G$ representation $V$ is graded by the character lattice of $T$
\[ V = \bigoplus_{\mu\in X^*(T)}V_\mu\]
and we call $V_\mu$ the $\mu$-weight space of the representation $V$.
Notice however that the weight spaces $V_\mu$ are moved around by $G$!
The same Künneth type formula \eqref{e:kunneth for reps T} holds for the weight spaces of representations of $G$.

\medskip
\begin{definition}\label{D:primitive}
	A vector $v\in V$ of a $G$-module $V$ is called {\em primitive} if the line it generates is stable under the Borel subgroup $B$.
\end{definition}
In other words, a vector $v$ is primitive if there exists a character $\lambda\colon B\to \bC^*$ such that $b\cdot v = \lambda(b)v$ for all $b\in B$.
In particular, $v$ is a weight vector for $V$ of weight $\lambda$.

Recall that if $V$ is a representation of $G$ then by differentiating it we get a representation of the Lie algebra $\fg$.
We denoted by $\fn=\Lie(N)$ the Lie algebra of the unipotent radical $N$ of $B$.
The following are equivalent characterizations of the notion of primitive
\begin{lemma}\label{L:equivalent primitive vectors}
	Let $V$ be a representation of $G$ and consider a vector $v\in V$.
	Then the following are equivalent
	\begin{enumerate}
		\item $v$ is primitive,
		\item $v$ is fixed by $N$,
		\item the line $\bC v$ is fixed by $\fb$,
		\item $v$ is killed by $\fn$,
		\item $v$ is killed by $\fg_{\alpha_i}$ for all $i\in I$.
	\end{enumerate}
\end{lemma}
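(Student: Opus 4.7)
The plan is to prove the cyclic chain $(1)\Rightarrow(2)\Rightarrow(4)\Rightarrow(3)\Rightarrow(1)$, together with the auxiliary equivalence $(4)\Leftrightarrow(5)$. Throughout, the guiding principle is the standard correspondence between a connected algebraic group and its Lie algebra, together with the root-space description of $\fn$.

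I would begin with the easier implications. For $(1)\Rightarrow(2)$: a $B$-stable line $\bC v$ produces a character $\chi\colon B\to\bG_m$ with $b\cdot v=\chi(b)v$, and since $N$ is unipotent it carries no nontrivial characters, hence $\chi|_N=1$ and $n\cdot v=v$. The implication $(2)\Rightarrow(4)$ is obtained by differentiating the $N$-action. The implication $(3)\Rightarrow(1)$ follows from the fact that $B$ is connected: a linear subspace of a rational $B$-module is $B$-stable if and only if it is $\fb$-stable, applied to $L=\bC v$. The equivalence $(4)\Leftrightarrow(5)$ is purely Lie-theoretic: the forward direction is immediate from $\fg_{\alpha_i}\subset\fn$, while for the converse I would invoke the classical fact that the simple root spaces $\fg_{\alpha_i}$ generate $\fn$ as a Lie algebra (every positive root space arises from the simple ones by iterated Chevalley brackets), and then propagate annihilation from the generators to all of $\fn$ via the identity $[X,Y]\cdot v=X\cdot(Y\cdot v)-Y\cdot(X\cdot v)$.

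The hard part will be $(4)\Rightarrow(3)$. Since $\fb=\ft\oplus\fn$ and (4) only controls the $\fn$-part, one additionally needs $\ft\cdot v\subseteq\bC v$, i.e.\ $v$ must be a $T$-weight vector. This is not automatic from (4) alone (a vector fixed by $N$ need not be a $T$-eigenvector), but in the primitive-vector setting it is implicit: $T$ normalizes $N$, so $V^N$ is a $T$-stable subspace and decomposes into weight spaces $V^N=\bigoplus_\mu V^N_\mu$, and one works with a $v$ lying in a single such weight space $V^N_\mu$. Once this is granted, writing $X=X_\ft+X_\fn\in\fb$ one computes $X\cdot v=d\mu(X_\ft)\,v\in\bC v$, so $\bC v$ is $\fb$-stable and $(3)$ holds. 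This closes the cycle and, together with $(4)\Leftrightarrow(5)$, yields the full equivalence.
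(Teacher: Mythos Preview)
Your approach is essentially the same as the paper's: the semidirect product decomposition $B=N\rtimes T$ handles the first four equivalences, and the fact that the simple root spaces $\fg_{\alpha_i}$ generate $\fn$ as a Lie algebra gives $(4)\Leftrightarrow(5)$. The paper's proof is even terser than yours but follows the identical outline.

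You are right to flag the step $(4)\Rightarrow(3)$ as problematic, and in fact this is a genuine gap in the lemma \emph{as stated}, not merely in your argument. A vector killed by $\fn$ need not be a $T$-eigenvector: take $v=v_1+v_2$ where $v_1,v_2$ are highest weight vectors in two irreducible summands of $V$ with distinct highest weights; then $\fn\cdot v=0$ but $\ft\cdot v\not\subseteq\bC v$, so $\bC v$ is not $\fb$-stable and $v$ is not primitive. Thus $(2)$, $(4)$, $(5)$ hold while $(1)$, $(3)$ fail. The paper's proof glosses over this point with the same brevity. Your proposed workaround (``one works with a $v$ lying in a single weight space $V^N_\mu$'') is not a proof of the lemma as written but is exactly the correct repair: the equivalence holds once one adds the standing hypothesis that $v$ is a weight vector, and every application of the lemma in the paper is to vectors already known to lie in a fixed weight space. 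So your diagnosis is accurate and your fix is the right one; just be clear that it amends the statement rather than proving it verbatim.
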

\begin{proof}
	The group $B$ is a semidirect product $N\rtimes T$. This is responsible for the first four equivalences.
	The last one follows from the fact that the subspace $\oplus_{i\in I}\fg_{\alpha_i}$ generates the Lie algebra $\fn$.
\end{proof}

By Borel's fixed point theorem, every representation of $B$, hence of $G$, has primitive vectors.
An irreducible representation of $G$ will have a unique (up to scalar) such primitive vector and its weight will be the highest weight of the representation.

For an arbitrary representation $V$ of $G$, we denote by $V_\lambda^\prim$ the subspace of primitive vectors of weight $\lambda$.

\medskip
The irreducible representations of $G$ are indexed by the dominant characters $X^*(T)^+$. We denote the irreducible representation associated to $\lambda\in X^*(T)$ by $V(\lambda)$ -- it is of highest weight $\lambda$ and
the primitive (or highest weight) vector is unique up to multiplication by a scalar. 
We choose a highest weight vector of $V(\lambda)$ that we denote by $v_\lambda$.

For a character $\lambda$ of $T$ (hence also of $B$), we denote by
$\bC(\lambda)$ the corresponding one-dimensional representation of $T$
(or of $B$). The group $T$ (or $B$) acts on it through the character $\lambda$.

We will use the theorem of Borel--Weil on realization of irreducible representations as global sections of line bundles on the flag variety. For convenience we recall the statement below but first we need to introduce the construction of the relevant line bundles.
If $\lambda\in X^*(T)$ is a character of $T$ then it can be extended to a character of $B$ and we can consider the line bundle
\[ \cO(\lambda):=(G\times \bC(-\lambda))/B \to G/B\]
whose global sections are given by
\[ \rH^0(G/B,\cO(\lambda)) = \{f\colon G\to \bC\mid f(gb) = \lambda(b)f(g)\text{ for all }b\in B \text{ and } g\in G\}\]
and are endowed with a natural action of $G$.
\begin{theorem}[{\cite[{Ch. 2}]{jantzen2003representations}}]
	Let $\lambda$ be a dominant weight and let $\cO(\lambda)$  be the associated line bundle on $G/B$.
	Then we have an isomorphism of $G$-modules 
	\[\rH^0(G/B,\cO(\lambda))\simeq V(\lambda)^*.\]
\end{theorem}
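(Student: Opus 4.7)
The plan is to combine an explicit construction of a $G$-equivariant map $V(\lambda)^* \to \rH^0(G/B, \cO(\lambda))$ with a Frobenius reciprocity computation that pins down the full isotypic decomposition of the target. The explicit map will give injectivity, and the Frobenius computation will force $V(\lambda)^*$ to be the only irreducible summand, with multiplicity one.

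For the construction, I fix a highest weight vector $v_\lambda \in V(\lambda)$ and, for $\xi \in V(\lambda)^*$, set $\phi_\xi(g) = \xi(g \cdot v_\lambda)$. Because $v_\lambda$ is primitive of weight $\lambda$ (\cref{L:equivalent primitive vectors}), we have $b \cdot v_\lambda = \lambda(b) v_\lambda$ for every $b \in B$, hence $\phi_\xi(gb) = \lambda(b) \phi_\xi(g)$, so $\phi_\xi \in \rH^0(G/B, \cO(\lambda))$. The assignment $\xi \mapsto \phi_\xi$ is manifestly $G$-equivariant for the left-translation action on sections, and it is injective because the $G$-orbit of $v_\lambda$ linearly spans $V(\lambda)$ by irreducibility, so any $\xi$ with $\phi_\xi \equiv 0$ must be zero.

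For surjectivity I use that $G/B$ is projective, which makes $\rH^0(G/B, \cO(\lambda))$ finite-dimensional and hence completely reducible (since $G$ is reductive in characteristic zero). It therefore suffices to compute $\Hom_G(V(\mu)^*, \rH^0(G/B, \cO(\lambda)))$ for every dominant $\mu$. Frobenius reciprocity identifies this with $\Hom_B(V(\mu)^*, \bC(-\lambda)) \simeq (V(\mu) \otimes \bC(-\lambda))^B$. Passing to $N$-invariants and using that $V(\mu)^N$ is the one-dimensional highest weight line of weight $\mu$, the resulting $T$-representation has weight $\mu - \lambda$, so the $B$-invariants vanish unless $\mu = \lambda$ and are one-dimensional in that case. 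Combined with the injection above, this forces $\rH^0(G/B, \cO(\lambda)) \simeq V(\lambda)^*$ as $G$-modules.

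The main technical subtlety is keeping track of sign conventions: the definition $\cO(\lambda) = (G \times \bC(-\lambda))/B$ makes $\bC(-\lambda)$ (not $\bC(\lambda)$) the correct target $B$-module in Frobenius reciprocity, after which the weight computation becomes immediate. Beyond this, the argument is an application of complete reducibility and the elementary description of $N$-invariants of an irreducible representation already recorded in this section.
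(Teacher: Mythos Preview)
The paper does not prove this theorem; it is quoted from Jantzen with a citation and used as a black box, so there is no ``paper's own proof'' to compare against. Your argument is a correct self-contained proof of Borel--Weil in the characteristic-zero setting the paper assumes: the matrix-coefficient map $\xi\mapsto\phi_\xi$ is well-defined and $G$-equivariant for the reasons you give, and the Frobenius reciprocity computation $\Hom_G(V(\mu)^*,\ind_B^G\bC(-\lambda))\simeq (V(\mu)\otimes\bC(-\lambda))^B = (V(\mu)^N\otimes\bC(-\lambda))^T$ correctly yields $\delta_{\mu,\lambda}$, using exactly the adjunction and the uniqueness of the primitive line already recorded in the paper. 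One minor remark: since that Frobenius computation gives the multiplicity of every irreducible \emph{exactly} (not just an upper bound), semisimplicity alone forces $\rH^0\simeq V(\lambda)^*$, so your explicit injection is logically redundant---though it does exhibit the isomorphism concretely.
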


More generally, if $M$ is a $B$-module, then one can define the induced $G$-module as follows
\[ \ind_B^G(M):=\{f\colon G\to M\mid f(gb) = b^{-1}\cdot f(g) \text{ for all $b\in B$ and $g\in G$}\}.\]
In particular, notice that $\ind_B^G(\bC(-\lambda)) = \rH^0(G/B,\cO(\lambda))$.

In this context, the Frobenius reciprocity for representations stipulates that we have an adjoint pair of functors 
\[ \res_B^G\colon \Rep(G)\to \Rep(B)\colon \ind_B^G\]
where $\res_B^G$ is simply the restriction of a representation from $G$ to $B$.
See \cite[Proposition 3.4]{jantzen2003representations} for details.
If no confusion can arise, we will denote $\res_B^G(M)$ simply by $M$ as the context will make it clear in what category the object should be viewed.

We will use this reciprocity in the following way:
\begin{lemma}\label{L:basic adjunction}
	Let $\nu\in X^*(T)$ be a dominant character and let $M$ be a representation of $G$.
	Then we have a natural isomorphism of vector spaces
	\[ \Hom_G(V(\nu), M) = \Hom_B(\bC(\nu), M) = M_\nu^{\prim}.\]
\end{lemma}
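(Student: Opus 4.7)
The plan is to prove both claimed equalities by identifying each Hom space directly with $M_\nu^{\prim}$ through evaluation at a canonical vector, rather than invoking an adjunction blackbox. For the $B$-side, this canonical vector will be $1 \in \bC(\nu)$; for the $G$-side, the highest weight vector $v_\nu \in V(\nu)$. Each identification is manifestly natural in $M$.

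For the $B$-side equality $\Hom_B(\bC(\nu), M) = M_\nu^{\prim}$: a $B$-linear map $f \colon \bC(\nu) \to M$ is completely determined by $m := f(1)$, and $B$-equivariance reads $b \cdot m = \nu(b)\, m$ for every $b \in B$. Restricting to the semidirect product $B = N \rtimes T$, this is equivalent to $m$ being a $T$-weight vector of weight $\nu$ that is fixed by $N$. By \cref{L:equivalent primitive vectors}, this is exactly the condition $m \in M_\nu^{\prim}$. Conversely any $m \in M_\nu^{\prim}$ plainly defines such an $f$, so $f \mapsto f(1)$ is the sought natural isomorphism.

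For the $G$-side equality $\Hom_G(V(\nu), M) = M_\nu^{\prim}$: the evaluation map $\varphi \mapsto \varphi(v_\nu)$ lands in $M_\nu^{\prim}$ because $\varphi$ is $G$-equivariant and therefore $T$- and $N$-equivariant, and $v_\nu$ is itself primitive of weight $\nu$. Injectivity follows since $V(\nu)$ is irreducible, whence $v_\nu$ generates it as a $G$-module, so $\varphi$ is determined by its value on $v_\nu$. For surjectivity, given $m \in M_\nu^{\prim}$, invoke complete reducibility in characteristic zero to decompose $M \simeq \bigoplus_\lambda V(\lambda)^{\oplus n_\lambda}$; inside a single copy of $V(\lambda)$ the primitive weight-$\nu$ vectors vanish unless $\lambda = \nu$, in which case they form the line $\bC v_\nu$. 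Hence $M_\nu^{\prim}$ is contained in the $V(\nu)$-isotypic summand and identifies with $\bC^{n_\nu} \simeq \Hom_G(V(\nu), V(\nu)^{\oplus n_\nu})$ precisely through evaluation at $v_\nu$. Composing with the inclusion of this summand into $M$ supplies the required $G$-map sending $v_\nu \mapsto m$.

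The only mildly delicate step is the surjectivity in the last paragraph, which quietly uses complete reducibility (reductive $G$, characteristic zero) and the uniqueness up to scalar of the primitive vector in an irreducible module. A more categorical alternative would be to apply Frobenius reciprocity to the Borel--Weil realization $V(\nu)^* = \ind_B^G \bC(-\nu)$ together with a double-duality, but the evaluation approach above is shorter and makes all three identifications visibly the same isomorphism.
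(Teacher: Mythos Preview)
Your proof is correct but proceeds differently from the paper's. The paper argues via Frobenius reciprocity and Borel--Weil: writing $V(\nu) = \ind_B^G(\bC(\nu)^*)^*$, it dualizes and applies the adjunction $(\res_B^G,\ind_B^G)$ to obtain
\[
\Hom_G(V(\nu),M) = \Hom_G(M^*,\ind_B^G(\bC(\nu)^*)) = \Hom_B(M^*,\bC(\nu)^*) = \Hom_B(\bC(\nu),M) = M_\nu^{\prim}.
\]
This is exactly the ``more categorical alternative'' you mention in your final paragraph. Your approach instead identifies both Hom spaces with $M_\nu^{\prim}$ directly via evaluation at $1$ and at $v_\nu$, using complete reducibility for surjectivity on the $G$-side. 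Your route is more elementary in that it avoids Borel--Weil, and it makes the naturality and the compatibility of the two identifications transparent (both are evaluation maps). The paper's route is shorter and more structural, and has the advantage of not invoking semisimplicity of $\Rep(G)$ explicitly---though of course both arguments ultimately rest on the same representation-theoretic facts in characteristic zero.
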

\begin{proof}
	Write $V(\nu) = \ind_B^G(\bC(\nu)^*)^*$ and use the transpose and Frobenius reciprocity
	\[ 
		\begin{split}
		\Hom_G(\ind_B^G(\bC(\nu)^*)^*, M) &= \Hom_G(M^*,\ind_B^G(\bC(\nu)^*))\\
		&=\Hom_B(M^*,\bC(\nu)^*)\\
		&=\Hom_B(\bC(\nu), M)\\
		&=M_\nu^\prim
		\end{split}
	\]
where the last equality follows from the definition of primitive elements.
\end{proof}

\subsection{Lowest weight representations}
The dual notion of highest weight is called ``lowest weight''.
\begin{definition}\label{D:lowest weight}
A representation $V$ (of $G$ or of $B$) is said to be of lowest weight if it has a weight that is the smallest among all the weights of $V$. 
\end{definition}
For example, the irreducible $G$-representation $V(\lambda)$ is a lowest weight representation with lowest weight $w_0\lambda$.
Equivalently, $V(\lambda)^*$ is a lowest weight representation with lowest weight $-\lambda$.
These notions do not change upon restriction to the Borel subgroup $B$.
For convenience, we recall the following easy lemma:

\begin{lemma}\label{L:lowest weight indec}
Let $V$ be a lowest weight representation of $B$ and suppose that it is generated by a vector of lowest weight.
Then $V$ is an indecomposable representation.
In particular, the quotient of an indecomposable lowest weight $B$-module is still an indecomposable lowest weight $B$-module.
\end{lemma}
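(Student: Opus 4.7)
The plan is a direct weight-space argument that pivots on showing $\dim V_\mu = 1$, where $\mu$ is the lowest weight of $V$ and $v$ denotes the generating lowest weight vector. I would assume for contradiction a nontrivial decomposition $V = V_1 \oplus V_2$ of $B$-modules. Each $V_i$ is $T$-stable, so the splitting $v = v_1 + v_2$ places each $v_i$ in $(V_i)_\mu$, and since $V$ is generated by $v$ over the universal enveloping algebra $U(\fb)$, each summand satisfies $V_i = U(\fb)\cdot v_i$.

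The crux is then to identify the lowest weight space as $V_\mu = \bC v$. Since $T$ acts on $v$ through the scalar $\mu$, elements of $U(\ft)$ act on $v$ by scalars, and thus $U(\fb)\cdot v = U(\fn)\cdot v$. Any nonconstant monomial in $U(\fn)$ is a product of root vectors for positive roots $\alpha_{i_1},\dots,\alpha_{i_k}\in\Phi^+$, and so sends $v$ into the weight space of weight $\mu + \alpha_{i_1} + \cdots + \alpha_{i_k}$, which strictly exceeds $\mu$ in the partial order $\preceq$ on characters. Only the scalar term of $U(\fn)$ therefore contributes in weight $\mu$, giving $V_\mu = \bC v$.

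Combined with $V_\mu = (V_1)_\mu \oplus (V_2)_\mu$ containing both $v_1$ and $v_2$, this forces one of them to vanish, say $v_2 = 0$; then $V = U(\fb)\cdot v_1 \subseteq V_1$ and $V_2 = 0$, contradicting the nontriviality of the decomposition. For the ``in particular'' clause, a proper quotient $V/W$ of such a $V$ is generated as a $B$-module by the nonzero image $\bar v$ of $v$; the weight of $\bar v$ is still $\mu$, and since every weight of $V/W$ already appears in $V$ it remains $\succeq \mu$, so the first part of the lemma applies to $V/W$ with generator $\bar v$ and yields its indecomposability.

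I do not anticipate any genuine obstacle: the only mildly delicate step is the identification $V_\mu = \bC v$, which is an immediate consequence of the root-space decomposition of $\fn$ together with the definition of weight vector.
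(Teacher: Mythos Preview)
Your argument is correct and follows the same route as the paper's proof: decompose $V=V_1\oplus V_2$, use that the lowest weight space is one-dimensional to force $v$ into one summand, and conclude from the generating property. You supply more detail than the paper does---in particular the justification that $V_\mu=\bC v$ via the $U(\fn)$-action, and an explicit treatment of the ``in particular'' clause---but the structure and the key idea are the same.
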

\begin{proof}
	Write $V=V_1\oplus V_2$ and let $v$ be the lowest weight vector that generates $V$ as a $B$-representation. Since the weight space corresponding to the lowest weight is of dimension one, precisely one of the $V_1$, $V_2$ must contain $v$. Let us assume it is $V_1$. But since $v$ generates $V$ it follows that $V_1=V$ and hence $V_2=0$. 
\end{proof}

\begin{example}
	The irreducible $G$-representation $V(\lambda)^*$ is of lowest weight $-\lambda$.
	Moreover, as a $B$-representation, it is generated by the lowest weight vector dual of the highest weight vector $v_\lambda\in V(\lambda)$.
	By the above \cref{L:lowest weight indec} we get that $V(\lambda)^*$ is an indecomposable $B$-representation.
	Notice that if $L$ is an arbitrary one-dimensional $B$-module
        and $V$ is a lowest weight $B$-module then $V\otimes L$ is
        still a lowest weight $B$-module. 
	For example $\bC(\nu)\otimes V(\lambda)^*$ is an
        indecomposable lowest weight 
        $B$-module.
\end{example}

The following lemma will play an important role at the end of the proof of our main theorem.
\begin{lemma}\label{L:submodule bigger than -mu}
	Let $V=V(\mu)$ be an irreducible representation of $G$ of highest weight $\mu$ and view it as a $B$-module.
	Then the $B$-submodule $V^*_{\succ-\mu}$ is generated by $\oplus_{i\in I} V^*_{-\mu+\alpha_i}$.
\end{lemma}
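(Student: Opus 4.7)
The plan is to use the fact, recalled in the example preceding the lemma, that $V^* = V(\mu)^*$ is generated as a $B$-module by its lowest weight vector $v^* \in V^*_{-\mu}$ (the dual of $v_\mu$); equivalently, $V^* = U(\fn) \cdot v^*$ where $U(\fn)$ is the universal enveloping algebra of $\fn = \Lie(N)$. First I would verify that $V^*_{\succ -\mu}$ is a $B$-submodule: it is $T$-stable because it is a sum of weight spaces, and $N$-stable because each $\fg_\alpha$ with $\alpha \in \Phi^+$ shifts weights by a positive root, hence preserves the condition $\succ -\mu$. This gives the easy inclusion that the $B$-submodule generated by $\bigoplus_i V^*_{-\mu+\alpha_i}$ lies inside $V^*_{\succ-\mu}$, so only the reverse inclusion requires work.

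For the reverse inclusion, note that $U(\fn) = \bC \oplus U(\fn)_+$, where $U(\fn)_+$ is the augmentation ideal, and by Poincar\'e--Birkhoff--Witt any monomial of positive degree in $U(\fn)$ sends $v^*$ to a weight vector of weight $-\mu + \gamma$ with $\gamma \in \bN \Phi^+ \setminus \{0\}$. Thus $V^*_{\succ -\mu} = U(\fn)_+ \cdot v^*$. The core step is then the algebraic identity
\[ U(\fn)_+ = \sum_{i \in I} U(\fn) \cdot \fg_{\alpha_i}, \]
which follows from the classical fact that $\fn$ is generated as a Lie algebra by its simple root spaces. I would prove it by induction on height: for a positive root $\beta$ of height $\ge 2$ one may pick a simple $\alpha_i$ with $\beta - \alpha_i \in \Phi^+$, and use $\fg_\beta = [\fg_{\alpha_i},\fg_{\beta-\alpha_i}] \subseteq U(\fn)\cdot \fg_{\alpha_i} + U(\fn)\cdot \fg_{\beta-\alpha_i}$ to reduce to lower height.

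Combining the ingredients yields
\[ V^*_{\succ -\mu} = U(\fn)_+ \cdot v^* = \sum_{i \in I} U(\fn)\cdot \fg_{\alpha_i}\cdot v^* \subseteq \sum_{i\in I} U(\fn)\cdot V^*_{-\mu+\alpha_i}, \]
where the inclusion uses the tautological $\fg_{\alpha_i}\cdot v^* \subseteq V^*_{-\mu+\alpha_i}$ (so one avoids any discussion of weight multiplicities). The right-hand side is precisely the $B$-submodule generated by $\bigoplus_i V^*_{-\mu+\alpha_i}$, because each $V^*_{-\mu+\alpha_i}$ is $T$-stable and so the $U(\fb)$-action reduces to the $U(\fn)$-action up to scalars. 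I expect the main obstacle to be the algebraic identity $U(\fn)_+ = \sum_i U(\fn) \cdot \fg_{\alpha_i}$: although it is standard, it does require the height-induction sketched above and some care in passing from "generated as a Lie algebra" to "generated as a left ideal of $U(\fn)$".
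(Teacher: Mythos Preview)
Your proof is correct and follows essentially the same approach as the paper: both arguments use that $V^* = U(\fn)\cdot \xi$ for the lowest weight vector $\xi$, then reduce via PBW and an induction on the height of roots, exploiting that $\fn$ is generated by the simple root spaces. The only cosmetic difference is that you package the height induction as the abstract left-ideal identity $U(\fn)_+ = \sum_i U(\fn)\cdot\fg_{\alpha_i}$ before applying it to $\xi$, whereas the paper carries out the same induction directly on elements of the form $e_\beta\xi$.
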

\begin{proof}
	Let us denote by $W$ the $B$-submodule $\oplus_{i\in I} V^*_{-\mu+\alpha_i}$.
	We clearly have the inclusion 
	\begin{equation}\label{eq:W sub V^*bigger than -mu}
			W\subseteq V^*_{\succ-\mu}.
	\end{equation}
	We will argue on weight spaces.

	Let us denote by $\xi$ a generator of the weight space $V^*_{-\mu}$.
	As we recalled before, $V^*$ is generated as a $B$-module by $\xi$, hence also as an $N$-module.
	In terms of Lie algebras this is equivalent to
	\[V^* = \cU(\fn)\xi\]
	where $\cU(\fn)$ is the enveloping algebra of the Lie algebra $\fn=\Lie(N)$.
	Hence $V^*$ is a quotient of the algebra $\cU(\fn)$
	and by the PBW theorem, for any $\alpha\in \bN\Phi^*$, we obtain a generating set for the weight space $V^*_{-\mu+\alpha}$ as follows
	\[
	V^*_{-\mu+\alpha} = \<e_{\beta_1}e_{\beta_2}\dots e_{\beta_k}\xi\mid k\ge 1, \beta_l\in\Phi^+\text{ for all }l\>. 
	\]
	In order to show equality in \eqref{eq:W sub V^*bigger than -mu} it is therefore enough to show that for any $k\ge 1$ and any $\beta_1,\dots,\beta_k\in\Phi^+$  the vector
	$e_{\beta_1}e_{\beta_2}\dots e_{\beta_k}\xi$ belongs to $W$.
	
	Since $W$ is a $\cU(\fn)$-submodule we only need to show that the vector $e_\beta\xi$ belongs to $W$ for $\beta\in\Phi^+$. We will show this by induction on the height of $\beta$. Recall that the height of a root $\beta\in\Phi^+$ is the number of simple roots that are needed in its expression.
	
	If $\beta=\alpha_i$ is a simple root then $e_{\alpha_i}\xi\in V_{-\mu+\alpha_i}\subset W$ by definition.
	If $\beta$ is not a simple root then there exist two roots $\beta_1,\beta_2\in\Phi^+$ such that 
	$\beta = \beta_1+\beta_2$. 
	We have $\height(\beta) = \height(\beta_1)+\height(\beta_2)$ hence both $\beta_1$ and $\beta_2$ are of smaller height than $\beta$.
	
	By the structure of reductive Lie algebras we moreover have
	\[ [\fg_{\beta_1},\fg_{\beta_2}] = \fg_{\beta} \]
	which implies that the commutator $[e_{\beta_1},e_{\beta_2}]$ is equal to $ce_\beta$ for a non-zero scalar $c\in\bC^*$.
	We obtain the equality
	\[ ce_\beta\xi = e_{\beta_1}e_{\beta_2}\xi-e_{\beta_2}e_{\beta_1}\xi.\]
	Since both $\beta_1$ and $\beta_2$ are of smaller height than $\beta$, we can use the induction hypothesis to deduce that $e_{\beta_1}\xi$ and $e_{\beta_2}\xi$ belong to $W$ and therefore that $e_\beta\xi$ belongs to $W$.
\end{proof}

\subsection{Representations of $\SL_2$}
We briefly recall some basic facts about the representation theory of $\SL_2$.
It is more convenient to linearize further and work with representations of the Lie algebra $\fsl_2$.

Consider the standard basis $\{e,h,f\}$ of $\fsl_2$ with the usual relations.
\begin{theorem}[{\cite[Theorem7.2]{humphreys2012introduction}}]\label{T:sl_2 reps}
	The irreducible modules of $\fsl_2$ (hence of $\SL_2$) are in bijection with the natural numbers.
	Specifically, a value $n\in \bN$ corresponds to the irreducible representation
	\[ V(n) =\bigoplus_{k=0}^n V(n)_{n-2i}\]
	where $V(n)_{n-2k}$ is a weight space of weight $n-2k$ and dimension one. If we let $v_n\in V(n)_n$ be a highest weight vector then $v_{n-2k}:=f^kv_n$ spans the weight space $V(n)_{n-2k}$ and
	$ev_{n-2k} =k(n-k+1)v_{n-2k+2}$.
\end{theorem}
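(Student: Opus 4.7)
The plan is to analyze an arbitrary finite-dimensional irreducible $\fsl_2$-module $V$ and exhibit the claimed basis starting from a primitive vector. The Borel subalgebra $\fb = \langle h, e\rangle$ is solvable, so by Lie's theorem (or equivalently, by applying Borel's fixed point theorem to $B \subset \SL_2$ acting on $\bP(V)$, as invoked earlier in the paper) there is a common $h$-eigenvector $v \in V$ annihilated by $e$. Write $hv = \lambda v$ for some $\lambda \in \bC$. This is the only input besides the bracket relations $[h,e]=2e$, $[h,f]=-2f$, $[e,f]=h$.

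Next I would set $v_k := f^k v$ for $k \geq 0$ and prove, by a single induction on $k$, the two identities
\begin{align*}
h\, v_k &= (\lambda - 2k)\, v_k, \\
e\, v_k &= k(\lambda - k + 1)\, v_{k-1}.
\end{align*}
The first follows immediately from $[h,f] = -2f$. The second comes from writing $e v_k = ef\, v_{k-1} = f e\, v_{k-1} + h\, v_{k-1}$, using the inductive hypothesis on $ev_{k-1}$, and the first identity applied to $v_{k-1}$. Since the nonzero $v_k$ lie in distinct $h$-eigenspaces and $\dim V < \infty$, there is a least $n \geq 0$ with $v_{n+1} = 0$ while $v_n \neq 0$. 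Plugging $k = n+1$ into the second identity yields $0 = (n+1)(\lambda - n)\, v_n$, which forces $\lambda = n \in \bN$.

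Once $\lambda = n$ is pinned down, the span of $v_0, v_1, \ldots, v_n$ is visibly stable under $h$, $f$, and $e$ (by the two identities above), so irreducibility of $V$ forces it to equal $V$. This gives at most one irreducible of each highest weight $n$, together with the stated weight-space decomposition and the explicit formula for $e$. For existence, one defines an $(n+1)$-dimensional space with basis $v_0, \ldots, v_n$ by decreeing the two displayed identities (with $\lambda = n$) together with $f v_k = v_{k+1}$, $f v_n = 0$, and then checks the three $\fsl_2$-relations directly; irreducibility is immediate, since any nonzero submodule contains some $v_k$, and repeatedly applying $e$ yields $v_0$, after which repeated application of $f$ recovers everything. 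The only step with any substance is the $e$-recurrence, which is a one-line induction; I do not anticipate a genuine obstacle, the statement being entirely classical.
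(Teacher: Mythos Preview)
Your argument is correct and is essentially the classical proof found in Humphreys. Note, however, that the paper does not give its own proof of this statement: it is simply recorded with a citation to \cite[Theorem 7.2]{humphreys2012introduction} and used as input for the corollary that follows. So there is no ``paper's proof'' to compare against here; your write-up reproduces the standard textbook argument.
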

\begin{corollary}\label{c:strings sl_2 reps}
	If $V$ is a finite dimensional representation of $\fsl_2$ such that for some integers $l$ and $k$ both $l$ and $l+2k$ are weights of $V$ then $l+2m$ is a weight of $V$ for all $m=0,1,\dots,k$.
\end{corollary}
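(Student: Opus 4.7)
The plan is to decompose $V$ into irreducible $\fsl_2$-summands and then read off the claim from the explicit weight structure given in \cref{T:sl_2 reps}. First I would invoke Weyl's complete reducibility theorem (applicable since $\fsl_2$ is semisimple and we work in characteristic zero) to write $V=\bigoplus_j V(n_j)$ as a direct sum of irreducibles. By \cref{T:sl_2 reps}, the weights of $V(n_j)$ are precisely $\{-n_j,-n_j+2,\dots,n_j-2,n_j\}$: an arithmetic progression of step $2$, symmetric about $0$, and all of parity congruent to $n_j\pmod 2$. So the key observation driving the proof is that the set of weights of each irreducible $\fsl_2$-module is a ``full'' parity-matched symmetric interval around $0$.

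Without loss of generality I may assume $k\ge 0$; otherwise one swaps the roles of $l$ and $l+2k$. Since $l$ is a weight of $V$, some summand $V(n_{j_1})$ must contain it, forcing $n_{j_1}\ge|l|$ and $n_{j_1}\equiv l\pmod 2$. Similarly, $l+2k$ being a weight produces a summand $V(n_{j_2})$ with $n_{j_2}\ge|l+2k|$ and $n_{j_2}\equiv l+2k\equiv l\pmod 2$. I then set $N:=\max(n_{j_1},n_{j_2})$, so that $V(N)$ is itself one of the summands of $V$ and $N\equiv l\pmod 2$.

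For any $m\in\{0,1,\dots,k\}$, the integer $l+2m$ lies in the closed interval with endpoints $l$ and $l+2k$, hence $|l+2m|\le\max(|l|,|l+2k|)\le N$, and its parity matches that of $N$. Consequently $l+2m$ belongs to the weight set of $V(N)$, and a fortiori to the weight set of $V$, which is what we wanted to show.

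I do not foresee any real obstacle here: the argument is essentially bookkeeping on weights. The only subtle point is the ``merging'' step, namely realising that the two summands witnessing $l$ and $l+2k$ can be replaced by a single summand $V(N)$ large enough to absorb all intermediate weights; this works precisely because within each parity class the weight set of $V(N)$ is an interval symmetric about $0$, and unions of such intervals (of a fixed parity) are again intervals of the same shape.
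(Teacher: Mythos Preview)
Your proof is correct and follows essentially the same approach as the paper: both decompose $V$ into irreducibles via complete reducibility, use the explicit weight strings of \cref{T:sl_2 reps}, and then locate a single irreducible summand large enough to contain the whole string from $l$ to $l+2k$. The only cosmetic difference is that the paper first reduces to $l,k\ge 0$ using the symmetry of weights and then works inside the irreducible containing $l+2k$, whereas you bypass that reduction by taking $N=\max(n_{j_1},n_{j_2})$ directly; your route is slightly slicker but the underlying idea is identical.
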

\begin{proof}
	By \cref{T:sl_2 reps} we see that the set of weights of an irreducible representation of $\fsl_2$ is of the form
	\begin{equation}\label{eq:string sl_2}
	\{n-2a\mid a=0,1,\dots,n\} 
	\end{equation} 
	In particular it is symmetric with respect to $0$.
	By the semisimplicity of finite dimensional representations of $\fsl_2$, this will still be the case for $V$.
	
	It is therefore enough to treat the case $l,k\ge 0$.
	The weight $l+2k$ must belong to an irreducible subrepresentation of $V$, and by \eqref{eq:string sl_2}, all the weights of the form
	\[\{l+2k-2m\mid m=0,1,\dots,l+2k\}\]
	must also be weights of this subrepresentation, hence of $V$ and this finishes the proof.	
\end{proof}
\subsection{Root subgroups and more representations}
\subsubsection{} Given a positive root $\alpha\in\Phi^+$ there is a unique subgroup $\SL_2(\alpha)\subset G$ isomorphic to $\SL_2$ containing the image of the coroot $\valpha$ and such that the roots of $\SL_2(\alpha)$ are precisely $\{\pm\alpha\}$. See \cite[Theorem 13.18]{borel2012linear} for details.

In terms of Lie algebras we have 
\[\Lie(\SL_2(\alpha)=\fsl_2(\alpha)=\<f_\alpha,h_\alpha,e_\alpha\>\subset \fg\] 
such that $e_\alpha\in\fg_\alpha$ and $f_\alpha\in \fg_{-\alpha}$.
It follows that $h_\alpha :=[e_\alpha,f_\alpha]\in\Lie(T)$ and this corresponds to the coroot $\valpha$.
For simple roots $\alpha_i$ we will simply denote by $e_i$, $f_i$ the elements $e_{\alpha_i}$, $f_{\alpha_i}$.

\subsubsection{}Since the representation theory of $\SL_2$ is simple enough, it is profitable to restrict a representation of $G$ to the root subgroups $\SL_2(\alpha)$ for various roots $\alpha\in\Phi^+$.
For example, the following is yet another reformulation of the notion of primitive vector (see \cref{D:primitive})
\begin{lemma}\label{L:another equiv primitive}
	Let $V$ be a $G$-module and let $v\in V$. The following are equivalent
	\begin{enumerate}
		\item $v$ is primitive,
		\item $v$ is primitive for the action of $\SL_2(\alpha)$ for all positive roots $\alpha$,
		\item $v$ is primitive for the action of $\SL_2(\alpha_i)$ for all simple roots $\alpha_i$,
		\item $e_i=0$ for all $i\in I$.
	\end{enumerate}
\end{lemma}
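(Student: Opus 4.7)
The equivalence $(1)\Leftrightarrow(4)$ is already contained in \cref{L:equivalent primitive vectors}, since $\fg_{\alpha_i}=\bC e_i$, so I would invoke it directly. The implication $(2)\Rightarrow(3)$ is trivial because simple roots are positive. Thus the work reduces to establishing $(1)\Rightarrow(2)$ and $(3)\Rightarrow(4)$.

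For $(1)\Rightarrow(2)$: recall that the Borel subalgebra of $\fsl_2(\alpha)=\langle f_\alpha,h_\alpha,e_\alpha\rangle$ is $\langle h_\alpha,e_\alpha\rangle$. If $v$ is primitive then by \cref{L:equivalent primitive vectors} it is a $T$-weight vector, so in particular an $h_\alpha$-eigenvector for every $\alpha\in\Phi^+$ (as $h_\alpha\in\Lie(T)$); and since $e_\alpha\in\fg_\alpha\subset\fn$, again by \cref{L:equivalent primitive vectors} we have $e_\alpha v=0$. Hence the line $\bC v$ is stable under the Borel of $\SL_2(\alpha)$, which means $v$ is primitive for $\SL_2(\alpha)$.

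For $(3)\Rightarrow(4)$: fix $i\in I$ and view $V$ as an $\fsl_2(\alpha_i)$-module. The hypothesis says the line $\bC v$ is stable under the Borel subalgebra $\langle h_{\alpha_i},e_i\rangle$, so there is a scalar $c_i\in\bC$ with $e_iv=c_iv$. On a finite-dimensional $\fsl_2$-representation $e_i$ acts nilpotently (by \cref{T:sl_2 reps} it strictly increases the $h_{\alpha_i}$-weight), and a nilpotent operator has no nonzero eigenvalues, forcing $c_i=0$. Thus $e_iv=0$ for every $i\in I$, giving $(4)$. The only subtlety worth flagging is that condition $(3)$ implicitly ensures $v$ is a $T$-weight vector: $v$ is an eigenvector for each $h_{\alpha_i}$, and the family $\{h_{\alpha_i}\}_{i\in I}$ spans $\Lie(T)$, so $v$ is a common eigenvector for the whole Cartan, hence a $T$-weight vector. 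This is the only mildly non-formal point; everything else is bookkeeping with the $\fsl_2(\alpha)$-triples.
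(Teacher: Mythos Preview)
Your argument is correct. The paper actually states this lemma without proof, treating it as an immediate reformulation of \cref{L:equivalent primitive vectors} in terms of the root subgroups $\SL_2(\alpha)$; your write-up supplies exactly the bookkeeping one would expect (using that $e_\alpha\in\fn$ for $(1)\Rightarrow(2)$ and nilpotency of $e_i$ for $(3)\Rightarrow(4)$), so there is nothing to compare beyond noting that you have made explicit what the paper leaves implicit.
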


Recall also that for a semisimple group, the category of finite dimensional representations is equivalent to the category of modules over its Lie algebra. We will often make use of this equivalence.
Let us note the following consequence of the existence of root subgroups and of the representation theory of $\fsl_2$ (see also \cref{c:strings sl_2 reps})
\begin{proposition}[{\cite[Proposition 3, p.124]{Bour78}}]\label{P:action root subgr}
	Let $V$ be a representation of the group $G$ and denote by $\Omega\subset X^*(T)$ the weights of $V$.
	For $\lambda$ a weight of $V$ and $\alpha$ a positive root of $G$ 
	consider the set of integers
	\[ I:=\{m\in\bZ\mid \lambda+m\alpha\in\Omega\}.\]
	If we put $p=\max(I)$ and $q:=-\min(I)$, then
	\[I=[-q,p] \text{ and } \<\lambda,\valpha\> = q-p.\]
	Said otherwise, we have an inclusion
	\[ \{\lambda-q\alpha,\lambda-(q-1)\alpha,\dots,\lambda+(p-1)\alpha,\lambda+p\alpha\}\subset\Omega \]
	which is moreover maximal.

\end{proposition}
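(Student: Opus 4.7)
The plan is to reduce everything to representation theory of $\fsl_2$ by restricting $V$ along the root subgroup $\SL_2(\alpha) \subset G$, whose Lie algebra is $\fsl_2(\alpha) = \langle e_\alpha, h_\alpha, f_\alpha\rangle$. First, I would form the subspace
\[ V^\lambda_\alpha := \bigoplus_{m \in \bZ} V_{\lambda + m\alpha}, \]
and observe that it is $\fsl_2(\alpha)$-stable: $e_\alpha \in \fg_\alpha$ raises $T$-weights by $\alpha$, $f_\alpha \in \fg_{-\alpha}$ lowers them by $\alpha$, and $h_\alpha \in \ft$ preserves weight spaces. Via the coroot $\valpha\colon \bG_m \to T$, the $\fsl_2(\alpha)$-weight acting on $V_{\lambda + m\alpha}$ equals $\<\lambda + m\alpha, \valpha\> = \<\lambda,\valpha\> + 2m$, using the standard identity $\<\alpha,\valpha\> = 2$. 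Hence the set of $\fsl_2(\alpha)$-weights appearing in $V^\lambda_\alpha$ is exactly $\{\<\lambda,\valpha\> + 2m : m \in I\}$, and it is finite because $V$ is finite dimensional.

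The equality $I = [-q,p]$ is then immediate from Corollary \ref{c:strings sl_2 reps}: the extremal values $\<\lambda,\valpha\> - 2q$ and $\<\lambda,\valpha\> + 2p$ both occur as weights of the finite-dimensional $\fsl_2$-module $V^\lambda_\alpha$, and since they have the same parity, every weight of the form $\<\lambda,\valpha\> + 2m$ with $m \in \{-q, -q+1, \dots, p\}$ occurs as well.

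For the identity $\<\lambda,\valpha\> = q - p$, I would invoke the symmetry of the multiset of weights of any finite-dimensional $\fsl_2$-module, which is visible from Theorem \ref{T:sl_2 reps} applied to each irreducible summand of $V^\lambda_\alpha$. Set $b := \<\lambda,\valpha\> + 2p$ and $a := \<\lambda,\valpha\> - 2q$ for the largest and smallest $\fsl_2(\alpha)$-weights. Since $b$ is a weight, so is $-b$, which forces $-b \ge a$; symmetrically $-a \le b$. Combining gives $a + b = 0$, and this rearranges to $\<\lambda,\valpha\> = q - p$.

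The proof is short once one makes the reduction to $\fsl_2(\alpha)$, so I do not anticipate a serious obstacle. The only mildly delicate point is the bookkeeping: tracking that the $\valpha$-weight on $V_{\lambda + m\alpha}$ is $\<\lambda,\valpha\> + 2m$ (and not something off by a factor), which rests squarely on $\<\alpha,\valpha\> = 2$. Everything else is a direct application of Theorem \ref{T:sl_2 reps} and Corollary \ref{c:strings sl_2 reps}.
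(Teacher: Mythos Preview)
Your argument is correct. The paper does not actually supply a proof of this proposition; it is quoted as a standard result from Bourbaki, so there is nothing in the paper to compare against beyond noting that your reduction to $\fsl_2(\alpha)$ via the subspace $V^\lambda_\alpha=\bigoplus_{m}V_{\lambda+m\alpha}$ is exactly the classical argument. The one point worth flagging is that the symmetry step uses that $V^\lambda_\alpha$ itself (not all of $V$) has a weight set symmetric about $0$; this holds because $V^\lambda_\alpha$ is a finite-dimensional $\fsl_2(\alpha)$-submodule, which you established, so the step is sound.
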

The following corollary will be useful for us
\begin{corollary}\label{C:SL2alpha on lowest weight}
	Let $V$ be an irreducible representation of $G$ of lowest weight $-\lambda$, where $\lambda$ is dominant and
	let $\alpha$ be a positive root. 
	The following are equivalent
	\begin{enumerate}
		\item 	$V_{-\lambda+\alpha}=0$, 
		\item $V_{-\lambda+m\alpha}=0$ for all $m\ge 1$,
		\item $\<\lambda, \valpha\>=0$.
	\end{enumerate}
\end{corollary}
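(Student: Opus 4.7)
The plan is to reduce everything to a direct application of \cref{P:action root subgr}. I would apply that proposition to the weight $\mu=-\lambda$ and the positive root $\alpha$, and examine the set
\[ I=\{m\in\bZ\mid -\lambda+m\alpha\in\Omega\}, \]
where $\Omega$ denotes the set of weights of $V$. Because $-\lambda$ is the lowest weight of $V$, every weight $\mu$ of $V$ satisfies $\mu+\lambda\in\bN\Phi^+$, so no element of the form $-\lambda-m\alpha$ with $m\ge 1$ can lie in $\Omega$. Hence $q:=-\min(I)=0$, and the proposition then gives
\[ \<-\lambda,\valpha\> = q-p = -p, \qquad\text{i.e.,}\qquad p=\<\lambda,\valpha\>, \]
so that $I=[0,\<\lambda,\valpha\>]$ and each of the intermediate weights $-\lambda,\,-\lambda+\alpha,\,\dots,\,-\lambda+\<\lambda,\valpha\>\alpha$ has a nonzero weight space in $V$.

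With this description of $I$, the three equivalences are immediate. The implication (3)$\Rightarrow$(2) is that $\<\lambda,\valpha\>=0$ forces $I=\{0\}$, hence $V_{-\lambda+m\alpha}=0$ for every $m\ge 1$. The implication (2)$\Rightarrow$(1) is tautological. Finally, for (1)$\Rightarrow$(3), if $V_{-\lambda+\alpha}=0$ then $1\notin I$; since $I$ is the interval $[0,\<\lambda,\valpha\>]$, this forces $\<\lambda,\valpha\>=0$.

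There is no real obstacle here: the entire content is packaged inside \cref{P:action root subgr} (which itself encodes the $\fsl_2(\alpha)$-string picture from \cref{T:sl_2 reps} and \cref{c:strings sl_2 reps}). The only mildly nontrivial input is recognizing that ``lowest weight $-\lambda$'' immediately kills the negative half of the $\alpha$-string through $-\lambda$, fixing $q=0$; once that is in hand, the equivalences reduce to reading off the endpoints of the interval $I$. If one prefers a more hands-on argument, the same conclusion can be drawn by restricting $V$ to $\SL_2(\alpha)$: the lowest weight vector $v_{-\lambda}$ is killed by $f_\alpha$ (there are no weights below $-\lambda$), so it generates an irreducible $\fsl_2(\alpha)$-submodule whose weights are exactly $\{-\lambda+m\alpha\mid 0\le m\le\<\lambda,\valpha\>\}$, and any other weight of $V$ congruent to $-\lambda$ modulo $\bZ\alpha$ would violate minimality of $-\lambda$.
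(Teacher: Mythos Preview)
Your proof is correct and follows exactly the same route as the paper's: apply \cref{P:action root subgr} to the weight $-\lambda$, use that $-\lambda$ is lowest to force $q=0$, deduce $I=[0,\<\lambda,\valpha\>]$, and read off the equivalences. The paper's own proof is a one-line version of precisely this argument.
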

\begin{proof}
	From the previous proposition we have $V_{-\lambda+m\alpha}\neq 0$ precisely for the integers $m$ such that $0\le m\le \<\lambda,\valpha\>]$. The claim follows.
\end{proof}

A special case of the previous corollary can be made more precise

\begin{corollary}\label{C:weight space lambda-alpha_i is of dim at most one}
	If $V$ is an irreducible representation of $G$ of highest weight $\nu$ and $\alpha_i$ is a simple root, then the following are equivalent
	\begin{enumerate}
		\item $V_{\nu-\alpha_i} \neq 0$,
		\item $\dim V_{\nu-\alpha_i} = 1$,
		\item $\<\nu,\valpha_i\>\neq 0$.
	\end{enumerate}
\end{corollary}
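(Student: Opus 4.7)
The plan is to establish the three equivalences by combining the previous corollary (for (1)$\Leftrightarrow$(3)) with a PBW/simple-root argument (for (1)$\Rightarrow$(2)); the implication (2)$\Rightarrow$(1) is trivial.

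For the equivalence (1)$\Leftrightarrow$(3), I would pass to the dual: the representation $V^*$ is irreducible of lowest weight $-\nu$, and duality preserves dimensions of weight spaces while negating weights, so $\dim V_{\nu-\alpha_i} = \dim (V^*)_{-\nu+\alpha_i}$. Applying \cref{C:SL2alpha on lowest weight} to $V^*$ and the positive root $\alpha_i$ (noting $\langle \nu, \valpha_i\rangle = \langle \nu, \valpha_i\rangle$) immediately gives that $V_{\nu-\alpha_i}=0$ iff $\langle \nu, \valpha_i\rangle=0$. Alternatively, one can apply \cref{P:action root subgr} directly to the weight $\nu$ and root $\alpha_i$: since $\nu$ is the highest weight of $V$, no weight of the form $\nu + m\alpha_i$ with $m\ge 1$ can occur, so $p=0$ and thus $q = \langle \nu, \valpha_i\rangle$; this pins down the $\alpha_i$-string through $\nu$ as $\{\nu,\nu-\alpha_i,\dots,\nu-q\alpha_i\}$.

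The key new ingredient is (1)$\Rightarrow$(2), which is exactly where the simplicity of $\alpha_i$ is used. By the PBW theorem applied to $\fn^- = \bigoplus_{\beta\in\Phi^+}\fg_{-\beta}$, the representation $V$ is spanned by vectors of the form $f_{\beta_1}f_{\beta_2}\cdots f_{\beta_k}v_\nu$ with $\beta_j\in\Phi^+$. For such a vector to land in the weight space $V_{\nu-\alpha_i}$, one must have $\beta_1+\cdots+\beta_k = \alpha_i$. Since $\alpha_i$ is a simple root and every $\beta_j$ is a non-negative integer combination of simple roots with a nonzero contribution from at least one simple root, the only possibility is $k=1$ and $\beta_1=\alpha_i$. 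Consequently $V_{\nu-\alpha_i}$ is spanned by the single vector $f_i v_\nu$, so $\dim V_{\nu-\alpha_i}\le 1$. Combined with (1), this yields $\dim V_{\nu-\alpha_i}=1$.

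I do not anticipate any genuine obstacle here: the proof is a short, direct refinement of \cref{C:SL2alpha on lowest weight}, and the only non-formal input is the elementary observation that a simple root admits no non-trivial decomposition as a sum of positive roots, which reduces the PBW expansion to a single term.
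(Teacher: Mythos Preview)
Your argument is correct. The paper states this corollary without proof, presenting it as a sharpening of \cref{C:SL2alpha on lowest weight}; your PBW argument for (1)$\Rightarrow$(2) is exactly the kind of reasoning the paper employs elsewhere (compare the proof of \cref{L:submodule bigger than -mu}), and your use of \cref{P:action root subgr} for (1)$\Leftrightarrow$(3) is the intended route.
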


For a set of elements $Z\subset X^*(T)$ we denote by $\Conv(Z)\subset X^*(T)$ the convex envelope of $Z$.
It is the intersection of the real convex envelope in $X^*(T)\otimes_\bZ\bR$ with the lattice $X^*(T)$.

Another consequence of the representation theory of $\fsl_2$ is a complete description of all the weights of an irreducible $G$-module:
\begin{corollary}\label{C:weights of irred rep}
	Let $V$ be an irreducible representation of $G$ of highest weight $\lambda$. Denote by $\Omega\subset X^*(T)$ the set of its weights.
	Then we have
	\[ \Omega =  \Conv\{w\lambda\mid w\in W\} \cap \left(\lambda-\bN\Delta\right).\]
\end{corollary}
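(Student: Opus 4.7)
The plan is to use $W$-invariance to reduce to dominant $\mu$, and then combine the classical convex-hull lemma (for dominant $\lambda$ and dominant $\mu$, $\mu \in \Conv\{w\lambda\}$ iff $\mu \preceq \lambda$) with an induction on $\height(\lambda - \mu)$ driven by the string property of \cref{P:action root subgr}. First I verify $W$-invariance. The left-hand side $\Omega$ is manifestly $W$-stable. For the right-hand side, since $\lambda$ is dominant each $w\lambda \preceq \lambda$, so any $\mu \in \Conv\{w\lambda\}$ has the form $\lambda - \sum n_i\alpha_i$ with $n_i \in \bR_{\ge 0}$; if additionally $\mu \in \lambda + \bZ\Delta$, linear independence of the simple roots forces $n_i \in \bN$. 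Hence
\[ \Conv\{w\lambda\} \cap (\lambda - \bN\Delta) = \Conv\{w\lambda\} \cap (\lambda + \bZ\Delta), \]
which is $W$-stable since $\lambda + \bZ\Delta$ is a $W$-invariant coset of the root lattice. Both sides being $W$-invariant, it is enough to compare them on dominant weights. The $\subseteq$ inclusion for dominant $\mu$ is then immediate: the convex-hull lemma turns the right-hand condition into $\mu \preceq \lambda$, and every weight of $V(\lambda)$ lies in $\lambda - \bN\Delta$ because $V(\lambda)$ is generated by $v_\lambda$ under the lowering operators $f_i$, each of which drops the weight by $\alpha_i$.

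For the $\supseteq$ inclusion I would induct on $\height(\lambda - \mu)$; the base case $\mu = \lambda$ is trivial. In the inductive step, with $\mu$ dominant and $\mu \prec \lambda$, I claim there exists a simple root $\alpha_j$ appearing with positive coefficient in $\lambda - \mu = \sum m_i \alpha_i$ such that $\mu + \alpha_j \in \Conv\{w\lambda\}$. Then its dominant $W$-conjugate $\mu^+$ is dominant with $\mu^+ \preceq \lambda$ (by the convex-hull lemma again) and satisfies $\height(\lambda - \mu^+) \le \height(\lambda - (\mu + \alpha_j)) = \height(\lambda - \mu) - 1$. The induction hypothesis yields $\mu^+ \in \Omega$, and $W$-invariance of $\Omega$ gives $\mu + \alpha_j \in \Omega$. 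Finally, applying \cref{P:action root subgr} to the positive root $\alpha_j$ and the weight $\mu + \alpha_j$, the $\alpha_j$-string $[-q, p]$ through this weight satisfies $q - p = \<\mu + \alpha_j, \valpha_j\> = \<\mu, \valpha_j\> + 2 \ge 2$, so $q \ge 2$ and thus $\mu = (\mu + \alpha_j) - \alpha_j \in \Omega$.

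The main obstacle is producing the simple root $\alpha_j$ in the inductive step: a convex-geometric statement asserting that from any dominant $\mu \in \Conv\{w\lambda\}$ distinct from $\lambda$ one can move by a simple root appearing in $\lambda - \mu$ and remain inside the convex hull. This can be established via the strict norm inequality $\|\lambda\|^2 > \|\mu\|^2$ for distinct dominant $\mu \preceq \lambda$ (which follows from $(\lambda - \mu, \lambda + \mu) = \sum m_i (\alpha_i, \lambda + \mu) > 0$, using dominance of $\lambda + \mu$ together with $\lambda \neq \mu$) combined with a short case analysis on the Dynkin diagram; everything else is then a formal consequence of the $\fsl_2$-theory encoded in \cref{P:action root subgr}.
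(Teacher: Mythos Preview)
The paper does not supply a proof of \cref{C:weights of irred rep}; it is simply recorded as a standard consequence of the $\fsl_2$-string property \cref{P:action root subgr}, itself quoted from Bourbaki. Your plan --- establish $W$-invariance of both sides, reduce to dominant $\mu$ via the convex-hull lemma, and induct on $\height(\lambda-\mu)$ with each step closed by \cref{P:action root subgr} --- is exactly the textbook route, so there is nothing substantive to compare against the paper's own treatment.

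That said, the step you yourself flag as the ``main obstacle'' is left genuinely open. For dominant $\mu\prec\lambda$ you need a simple root $\alpha_j$ with $m_j>0$ and $\mu+\alpha_j\in\Conv\{w\lambda\}$, i.e.\ $(\mu+\alpha_j)^+\preceq\lambda$, and your offered justification --- the norm inequality together with ``a short case analysis on the Dynkin diagram'' --- does not deliver this. The norm argument, whether read as $(\lambda-\mu,\lambda+\mu)>0$ or as $(\lambda-\mu,\lambda)>0$, only locates a $j$ with $m_j>0$ and $\langle\lambda,\valpha_j\rangle>0$; that controls the $\alpha_j$-string through $\lambda$, not the position of the dominant conjugate $(\mu+\alpha_j)^+$ relative to $\lambda$, and a case analysis with no cases supplied is not a proof. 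Since this step carries the entire content of the $\supseteq$ inclusion, the argument is incomplete as written. One clean way to close it is to replace ``simple root'' by ``positive root'' and use the covering relation on dominant weights: between dominant $\mu\prec\lambda$ there always exists a dominant $\mu'$ with $\mu'-\mu\in\Phi^+$ (this is where an honest type-by-type check enters; see Stembridge, \emph{Adv.\ Math.}\ \textbf{136} (1998), 340--364, or the treatment of saturated sets in Bourbaki, Ch.~VIII, \S7), after which \cref{P:action root subgr} applied to the root $\beta=\mu'-\mu$ and to $\mu'\in\Omega$ (inductive hypothesis) yields $\mu\in\Omega$, since $\langle\mu',\vbeta\rangle=\langle\mu,\vbeta\rangle+2\ge2$.
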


\section{The main result} \label{section: main}
We are now ready to state and prove our main result: it gives a classification of all the spaces of matrices of constant corank one associated to irreducible $G$-modules.

\begin{theorem}\label{T:main theorem}
Let $\lambda,\mu,\nu\in X^*(T)^+$ be non-zero dominant characters such that there is a morphism of $G$-modules
\[\varphi\colon V(\nu)\hookrightarrow\Hom(V(\mu),V(\lambda)).\]
and assume that $\dim(V(\mu))\le \dim(V(\lambda))$\\
Then $\varphi$ is of constant corank one if and only if there exists $i\in I$ such that the following
conditions are satisfied:
\begin{enumerate}[label=(\alph*)]
	\item \label{thm-c} $\lambda = \nu+\mu-\alpha_i$,
	\item \label{thm-b} $\mu$ is a multiple of $\nu$, i.e., $\mu\in \bN^*\nu$,
        \item \label{thm-a} $\mu$ is a multiple of the fundamental weight $\omega_i$, i.e., $\mu\in\bN^*\omega_i$,

\end{enumerate}
where $\omega_i$ is the fundamental weight corresponding to $\alpha_i$.
\end{theorem}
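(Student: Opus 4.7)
The plan is to prove the two implications separately, following the paper's hint that the direct implication is straightforward while the converse is more delicate.

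For the necessary direction, I would first use $G$-equivariance of $\tilde\varphi$ to reduce the constant-corank-$1$ hypothesis to a single fiber, namely the one over the highest-weight line $[v_\nu]\in\bP(V(\nu))$, which lies on the unique closed $G$-orbit. The hypothesis forces the fiber map $\varphi(v_\nu)\colon V(\mu)\to V(\lambda)$ to have corank exactly $1$. Since $v_\nu$ is $N$-fixed, $\varphi(v_\nu)$ is $N$-equivariant, so its image is a codimension-one $B$-submodule of $V(\lambda)$. Dualizing, such submodules correspond to one-dimensional $B$-submodules of the irreducible $V(\lambda)^*$, and since $V(\lambda)^*$ has a unique highest-weight line (of weight $-w_0\lambda$), the image of $\varphi(v_\nu)$ must equal the $B$-submodule $V(\lambda)_{\succ w_0\lambda}=\bigoplus_{\omega\ne w_0\lambda}V(\lambda)_\omega$.

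Next I would derive (a), (b), (c) by weight analysis. The vector $\varphi(v_\nu)(v_\mu)$ is $N$-fixed of weight $\mu+\nu$, so it equals $v_\lambda$ if $\mu+\nu=\lambda$ and zero otherwise. Focusing on the case $\mu+\nu\ne\lambda$ (where $v_\mu\in\ker\varphi(v_\nu)$), the image is generated as a $B$-module by $\varphi(v_\nu)$ applied to weight vectors of weight $\mu-\alpha_j$. Applying \cref{L:submodule bigger than -mu} dually to $V(\lambda)$, the target $V(\lambda)_{\succ w_0\lambda}$ is generated by the weight spaces $V(\lambda)_{w_0\lambda+\alpha_j}$. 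Matching these generators (shifted by $\nu$) singles out a simple root $\alpha_i$ with $\lambda=\mu+\nu-\alpha_i$, giving (a). Using the $\SL_2(\alpha_j)$-string structure from \cref{P:action root subgr} and \cref{C:weights of irred rep}, and comparing weight multiplicities in $V(\mu)$ and $V(\lambda)$, I would rule out $\langle\mu,\check\alpha_j\rangle\ne 0$ for $j\ne i$, forcing $\mu\in\bN^*\omega_i$; the analogous argument on $V(\nu)$ gives $\nu\in\bN^*\omega_i$, and combining yields $\mu\in\bN^*\nu$, i.e., (b) and (c).

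For the sufficient direction, given (a)--(c), I would construct $\varphi$ by dualizing the projection $V(\mu)\otimes V(\nu)\twoheadrightarrow V(\lambda)$ onto the $V(\lambda)$-isotypic summand, which exists and is unique up to scalar because $V(\lambda)=V(\mu+\nu-\alpha_i)$ appears as the sub-leading component of $V(\mu)\otimes V(\nu)$ when $\mu,\nu$ are multiples of $\omega_i$. I would then verify that the fiber $\varphi(v_\nu)$ has corank $1$ by showing its image equals $V(\lambda)_{\succ w_0\lambda}$ using the explicit weight structure. Finally I propagate constant corank from the closed orbit $G\cdot[v_\nu]$ to all of $\bP(V(\nu))$: when $\dim V(\mu)=\dim V(\lambda)-1$ this is automatic from lower-semicontinuity of rank, while when $\dim V(\mu)=\dim V(\lambda)$ I would analyze $\det\tilde\varphi$ as a $G$-equivariant section of a line bundle on $\bP(V(\nu))$, showing that its vanishing on the closed orbit combined with Borel--Weil forces $\det\tilde\varphi\equiv 0$, so the rank never attains full value.

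The main obstacle I anticipate is the sufficient direction, specifically verifying that the image of the constructed $\varphi(v_\nu)$ is precisely the codimension-one submodule $V(\lambda)_{\succ w_0\lambda}$ (and not a strictly smaller $B$-submodule), together with the global rank control in the equal-dimension subcase. A further delicate point in the necessary direction is the weight-dimension counting that promotes (a) to the multiplicity conditions (b) and (c).
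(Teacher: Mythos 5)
Your proposal has the right general shape (restrict to the closed orbit, analyze $B$-module structure, then construct the map via the multiplicity-one component of $V(\mu)\otimes V(\nu)$), but several steps would fail as written. The most serious problem is in the necessary direction. With $\dim V(\mu)\le\dim V(\lambda)$, constant corank one means each $\varphi(u)\colon V(\mu)\to V(\lambda)$ has one-dimensional \emph{kernel}; equivalently, after transposing, the image of $\bC(\nu)\otimes V(\lambda)^*\to V(\mu)^*$ has codimension one in the \emph{smaller} space $V(\mu)^*$. Your identification of $\Ima\varphi(v_\nu)$ with the codimension-one $B$-submodule $V(\lambda)_{\succ w_0\lambda}$ is therefore false whenever $\dim V(\mu)<\dim V(\lambda)$ (the typical case), and the generator-matching you build on it would constrain $\lambda$ rather than $\mu$ to be a multiple of a fundamental weight --- which is incompatible with the theorem, since $\lambda=(d+1)\nu-\alpha_i$ pairs nontrivially with $\check\alpha_j$ for every $j$ adjacent to $i$. (Relatedly, $V(\mu)$ is generated as a $B$-module by its \emph{lowest} weight vector, not by the weight spaces $V(\mu)_{\mu-\alpha_j}$.) The paper sidesteps all of this by transposing first and running the argument inside $V(\mu)^*$. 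A second, independent gap: condition (b) cannot be read off from the fiber over $[v_\nu]$. For $G=\SL_2$, $\mu=2$, $\nu=\lambda=3$, conditions (a) and (c) hold and the fiber of the $V(3)$-summand of $\Hom(V(2),V(3))$ over the highest weight line has one-dimensional kernel, yet (b) fails and the generic member of the space is injective. The paper's proof of (b) is genuinely global: the cokernel is a $G$-equivariant line bundle on $\bP(V(\nu))$, hence some $\cO(n)$, and comparing its fiber at $[v_\nu]$ (weight $-n\nu$) with the cokernel of $\varphi'$ (weight $-\mu$) forces $\mu=n\nu$. Your substitute --- deriving $\mu,\nu\in\bN^*\omega_i$ and ``combining'' --- is a non sequitur: $\mu=2\omega_i$ and $\nu=3\omega_i$ satisfy both without $\mu\in\bN^*\nu$.

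The sufficient direction inherits the same confusion. With the correct corank convention, semicontinuity of rank already gives corank $\le 1$ off the closed orbit once it equals one there; what must be proved is that the corank never drops to zero, i.e., that $\varphi(u)$ (equivalently, the transposed fiber map onto $V(\mu)^*$) is never of full rank. Your two proposed mechanisms do not deliver this: the matrices are genuinely rectangular, so $\det\tilde\varphi$ is unavailable, and the case $\dim V(\mu)=\dim V(\lambda)-1$ gives no automatic lower bound on the corank. The paper's essential ingredient here is the surjection $\psi\colon V(d\nu)^*\otimes\cO\to\cO(d)$ of \cref{L:constr psi} and \cref{C:surjective onto line bundle} together with the vanishing $\psi\circ\phi=0$, which follows from $\Hom_G(V(\lambda),\Sym^{d+1}V(\nu))=0$ (\cref{L:no map Vlambda to Symr}, \cref{L:psiphi is zero}); this traps the image of every fiber of $\phi$ in a hyperplane and yields corank $\ge 1$ everywhere. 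Without an ingredient of this kind, your construction only controls the corank on the closed orbit. Finally, the verification that the closed-orbit fiber has corank exactly one should be carried out in $V(\mu)^*$ using \cref{L:submodule bigger than -mu} and the hypothesis $\mu\in\bN^*\omega_i$ (so that $V(\mu)^*_{\succ-\mu}$ has the single minimal weight $-\mu+\alpha_i$ and is generated by that weight space), not in $V(\lambda)$.
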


\begin{remark}
	The assumption $\dim(V(\mu))\le \dim (V(\lambda))$ can be done without any loss of generality; indeed, if it is not satisfied, then one can take the transpose and use the isomorphism $V(\lambda)^* \simeq V(-w_0\lambda)$, where $w_0$ is the longest element of the Weyl group, to fall back into the setting of the theorem.
\end{remark}

\subsection{The direct implication}\label{ss:direct implication}

Here is the synopsis of the proof. We apply first Frobenius reciprocity to $\varphi$ to obtain a map of lowest weight $B$-modules. Then we use the corank one hypothesis, a weight-space argument and the representation theory of $\SL_2$ (\cref{C:SL2alpha on lowest weight}, \cref{C:weights of irred rep}) to deduce points \ref{thm-c} and \ref{thm-a}. 
Lastly, arguing on the first Chern class we show \ref{thm-b}.

\begin{proof}[Proof of the direct implication]
	Suppose we have such a constant corank 1 map $\varphi$. By taking the transpose we can see it as a $G$-equivariant map
\begin{equation}\label{eq:transpose of varphi}
			\varphi\colon V(\nu)\to \Hom(V(\lambda)^*, V(\mu)^*).
\end{equation}
		It is more convenient to work with $\varphi$ under this form in order to avoid using the longest element of the Weyl group $w_0\in W$.
		
	The assumption $\dim(V(\mu))\le \dim(V(\lambda))$ implies that the space of matrices \eqref{eq:transpose of varphi} is of corank 1 precisely when $\dim(\coker(\varphi(u)))=1$ for all $u\in V(\nu)$. 
	
    Using the basic Frobenius reciprocity from \cref{L:basic adjunction} together with the tensor-Hom adjunction, the morphism \eqref{eq:transpose of varphi} is equivalent to a morphism of $B$-modules
	\begin{equation}\label{eq:phi' morph B mod}
		\varphi'\colon \bC(\nu)\otimes V(\lambda)^*\to V(\mu)^*.
	\end{equation}

      \medskip
      
	Both sides are lowest weight indecomposable $B$-modules. The left hand side is a $B$-module of lowest weight equal to $\nu-\lambda$ generated by a single vector, say $v$. 
	Similarly, the right hand side is a $B$-module of lowest weight equal to $-\mu$ generated by a single vector, call it $z$.
	
	The map $\varphi'\colon \bC(\nu)\otimes V(\lambda)^*\to V(\mu)^*$ is uniquely determined by the image of the lowest weight generator $\varphi'(v)$ because the source is an indecomposable $B$-module generated by $v$.
	
	The hypothesis on the corank of $\varphi$ implies that the morphism $\varphi'$ has a dimension one cokernel.
	Since $\varphi'$ is a morphism of $B$-modules it respects the weight spaces and its image is a lowest weight submodule of $V(\mu)^*$ not containing the lowest weight vector.
	In other words, we have shown the equality
	\begin{align}\label{e:image varphi'}
	\Ima(\varphi') = V(\mu)^*_{\succ -\mu}.
	\end{align} 

	Note that by (the dual of) \cref{C:weights of irred rep} the weights of $V(\mu)^*$ are of the form $-\mu+\alpha$ with $\alpha\in\bN\Delta$, and
	from the representation theory of $\SL_2$ (see \cref{C:SL2alpha on lowest weight}), 
	\begin{equation}\label{e:weights of Vmu*}
		-\mu+\alpha_i \text{ is a weight of } V(\mu)^* \text{ if and only if } \<\mu,\valpha_i\>\neq 0. 
	\end{equation}
	Therefore, the sub $B$-module $V(\mu)^*_{\succ -\mu}$ of $V(\mu)^*$ has lowest weights precisely $-\mu+\alpha_i$ for all $i\in I$ such that $\<\mu,\valpha_i\>\neq 0$. (Notice that $-\mu+\alpha_i$, $i\in I$, are pairwise incomparable since $\alpha_i-\alpha_j$ are not positive or negative linear combinations of roots.)

	Putting the last two paragraphs together, we can conclude that there exists a unique $i\in I$ such that $-\mu+\alpha_i$ is a weight of $V(\mu)^*$ and that this is moreover the lowest weight of $\Ima(\varphi')$ and hence of $\bC(\nu)\otimes V(\lambda)^*$.
	In particular, we obtain the equality $\nu-\lambda = -\mu+\alpha_i$, which is precisely point \ref{thm-c} of the theorem we want to prove.
	
	Using \cref{C:SL2alpha on lowest weight} we can moreover deduce that $\<\mu,\alpha_{i'}\>=0$ for all $i'\in I\setminus\{i\}$.
	This is equivalent to $\mu$ being a multiple of the fundamental
        weight $\omega_i$, say $\mu = r\omega_i$, for some non-zero
        $r\in\bN^*$.
	
	The restriction of the $G$-equivariant line bundle $\cO_{\bP(V(\nu))}(n)$ to the $T$-fixed point $[v_\nu]$ is the $T$-module $\bC(-n\nu)$, hence a multiple of the character $\nu$.
	The cokernel of $\varphi$ is such a $G$-equivariant line bundle and its restriction to the $T$-fixed point $[v_\nu]$ is of weight $-\mu$ because it is nothing but the cokernel of $\varphi'$.
	We must therefore have that $-\mu$ is a multiple of $\nu$ which proves \ref{thm-b} and therefore concludes the proof of the direct implication. 
\end{proof}

\subsection{The converse implication}
The proof of this part is a bit more involved.
Here is a summary of the strategy.
First we show, for a dominant character $\nu$ and an arbitrary positive integer $d$, the existence and uniqueness of a non-zero $G$-equivariant morphism
\begin{align*}
\psi\colon V(d\nu)^*\otimes \cO_{\bP(V(\nu))}&\to \cO_{\bP(V(\nu))}(d).
\end{align*}
which is moreover surjective.
Second, under hypothesis \ref{thm-b}, namely $\lambda=d\nu+\nu-\alpha_i$, we show the existence and uniqueness of a non-zero $G$-equivariant morphism
\[\phi\colon V(\lambda)^*\otimes \cO_{\bP(V(\nu)}(-1) \to V(d\nu)\otimes \cO_{\bP(V(\nu))}.\]
We furthermore prove that the composition $\psi\phi$ is zero.

Finally, using the hypothesis \ref{thm-b}, namely $\nu=e\omega_i$ for some $i\in I$, we deduce the exactness of the sequence
\begin{equation*}
	V(\lambda)^*\otimes \cO_{\bP(V(\nu)}(-1)\stackrel{\phi}\to V(d\nu)^*\otimes \cO_{\bP(V(\nu))} \stackrel{\psi}\to \cO_{\bP(V(\nu))}(d) 
\end{equation*}
which allows us to conclude.
\medskip

We start with some lemmas. 

\begin{lemma}\label{L:constr psi}
	Let $\nu$ be a dominant character and let $d\in\bN^*$.
	Then there exists a unique (up to scalar) non-zero $G$-equivariant map of vector bundles
	\[V(d\nu)^*\otimes_\bC \cO_{\bP(V(\nu))} \stackrel{\psi}\to \cO_{\bP(V(\nu))}(d).\]
	Equivalently, there exists a unique (up to scalar) $G$-equivariant inclusion
\begin{equation}\label{eq:unique Vdnu* inside H0}
		 V(d\nu)^*\hookrightarrow \rH^0(\bP(V(\nu)), \cO_{\bP(V(\nu))}(d)).
\end{equation}
\end{lemma}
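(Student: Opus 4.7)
The plan is to use the two equivalent formulations of the lemma interchangeably and reduce the problem to a weight-space calculation together with a Borel--Weil restriction argument. Since $\rH^0(\bP(V(\nu)),\cO(d)) = S^dV(\nu)^*$, the two formulations are related by global sections on one side and by the tensor--Hom adjunction on the other; any $G$-equivariant map $V(d\nu)^*\otimes \cO\to \cO(d)$ is determined by, and determines, an element of $\Hom_G(V(d\nu)^*, S^dV(\nu)^*)$, and dualising gives $\Hom_G(V(d\nu), S^dV(\nu))$.

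For uniqueness, I will apply \cref{L:basic adjunction}: the space $\Hom_G(V(d\nu), S^dV(\nu))$ equals the primitive vectors of weight $d\nu$ inside $S^dV(\nu)$. Using the Künneth-type formula \eqref{e:kunneth for reps T} for weight spaces together with the fact that every weight of $V(\nu)$ is $\preceq\nu$, one sees that $(S^dV(\nu))_{d\nu}$ is one-dimensional, spanned by $v_\nu^{\cdot d}$ (the symmetric product of $d$ copies of the highest weight vector). Because $d\nu$ is the highest weight appearing in $S^dV(\nu)$, any vector of this weight is automatically killed by all $\fg_{\alpha_i}$ (any would-be image has weight $d\nu+\alpha_i$, which is not a weight of $S^dV(\nu)$), so $v_\nu^{\cdot d}$ is primitive by \cref{L:equivalent primitive vectors}. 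Hence the Hom space is one-dimensional, which gives uniqueness up to scalar.

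For existence, I will exhibit a non-zero map via the closed $G$-orbit of $[v_\nu]$ in $\bP(V(\nu))$. This orbit is isomorphic to $G/P_\nu$ for some parabolic $P_\nu\supset B$, and the restriction of $\cO_{\bP(V(\nu))}(1)$ to this orbit is the homogeneous line bundle $\cO_{G/P_\nu}(\nu)$, so $\cO_{\bP(V(\nu))}(d)$ restricts to $\cO_{G/P_\nu}(d\nu)$. Borel--Weil then identifies $\rH^0(G/P_\nu,\cO(d\nu)) = V(d\nu)^*$, and the restriction of global sections provides a non-zero $G$-equivariant map
\[ S^dV(\nu)^* = \rH^0(\bP(V(\nu)),\cO(d)) \twoheadrightarrow V(d\nu)^*, \]
whose dual is the desired embedding \eqref{eq:unique Vdnu* inside H0} (the restriction is surjective because $\cO_{G/P_\nu}(d\nu)$ is globally generated and lifts to global sections of $\cO_{\bP}(d)$ via the standard Plücker-type argument, but we only need non-zeroness).

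Finally, to obtain surjectivity of $\psi$ as a map of sheaves, I will argue via the closed orbit once more: since $\psi$ is $G$-equivariant, the support of $\coker(\psi)$ is a $G$-stable closed subset of $\bP(V(\nu))$. Any non-empty such subset must contain the unique closed orbit $G/P_\nu$, so it suffices to show that $\psi|_{G/P_\nu}$ is surjective. Under the identification above, $\psi|_{G/P_\nu}$ is (up to the scalar ambiguity from uniqueness) the evaluation map $\rH^0(G/P_\nu,\cO(d\nu))\otimes\cO_{G/P_\nu}\to \cO_{G/P_\nu}(d\nu)$, which is surjective because $\cO_{G/P_\nu}(d\nu)$ is globally generated (the dominant weight $d\nu$ guarantees this). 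The main subtlety is this uniqueness/primitivity step, since everything else is a standard application of Borel--Weil and the closed orbit principle.
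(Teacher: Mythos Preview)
Your core argument is correct and essentially the paper's own: both reduce to showing that $V(d\nu)$ occurs with multiplicity one in $\Sym^d V(\nu)$, and both do this by observing that the $d\nu$-weight space is one-dimensional, spanned by $v_\nu^{\cdot d}$. The paper phrases it as $(V(d\nu)\otimes\Sym^d V(\nu)^*)^G=\End(V(d\nu))^G=\bC$, you phrase it via \cref{L:basic adjunction} as $(\Sym^d V(\nu))_{d\nu}^\prim=\bC v_\nu^{\cdot d}$; these are the same computation.

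Two remarks on the extras. First, your Borel--Weil ``existence'' paragraph is redundant: once you have shown $(\Sym^d V(\nu))_{d\nu}^\prim$ is one-dimensional (not merely at most one-dimensional), existence is already established. Second, surjectivity of $\psi$ is \emph{not} part of this lemma; in the paper it is the separate \cref{C:surjective onto line bundle}, proved via \cref{L:Vdnu*generates} by directly evaluating the lowest weight vector $\xi_{-d\nu}$ at $v_\nu$. Your closed-orbit argument for surjectivity is a valid alternative (the key step that $\psi|_{G/P_\nu}$ agrees up to scalar with the evaluation map follows from the same uniqueness on $G/P_\nu$), but it belongs to the next result rather than here.
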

\begin{proof}
	The data of $\psi$ is equivalent to the data of a
        $G$-invariant section 
	\[ \theta\in \rH^0(\bP(V(\nu)), V(d\nu)\otimes \cO_{\bP(V(\nu))}(d))^G.\]
	Since $\rH^0(\bP,\cO_{\bP(V(\nu))}(d))\simeq \Sym^d(V(\nu)^*)$,
        the section $\theta$ belongs to the $G$-invariant part of $V(d\nu)\otimes\Sym^d(V(\nu)^*)$ which is one dimensional as we will show below.
	
	\medskip
	
	The tensor product $V(\nu)^{\otimes d}$ decomposes as $V(d\nu)\oplus W'$ where $W'$ is a representation of $G$ such that all the highest weights appearing are smaller than $d\nu$. In particular, $V(d\nu)$ appears with multiplicity one in $V(\nu)^{\otimes d}$.
	Since its highest weight vector, namely $v_\nu^{\otimes d}$, is a symmetric tensor we deduce that $V(d\nu)$ is a direct summand in the symmetric power $\Sym^d(V(\nu))$ with multiplicity one.
	We therefore have a canonical decomposition of $G$-representations
	\[ \Sym^d(V(\nu)) = V(d\nu)\oplus W \]
	where $W$ is a representation that does not contain $V(d\nu)$.

	Now it follows that the $G$-invariants in
	\[ V(d\nu)\otimes \Sym^d(V(\nu)^*) = \left(V(d\nu)\otimes V(d\nu)^*\right) \oplus V(d\nu)\otimes W^*.\]
	are reduced to $\End(V(d\nu))^G=\bC\id$.
	This implies the existence and uniqueness (up to scalar) of $\theta$ hence of $\psi$.

\end{proof}

\begin{lemma}\label{L:Vdnu*generates}
	The unique $G$-equivariant linear subspace \eqref{eq:unique Vdnu* inside H0}
	\[ V(d\nu)^*\subset \rH^0(\bP(V(\nu)),\cO_{\bP(V(\nu))}(d))\]
	generates the line bundle $\cO_{\bP(V(\nu))}(d)$.
\end{lemma}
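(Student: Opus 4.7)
The plan is to show that the base locus of the linear system $V(d\nu)^* \subset \rH^0(\bP(V(\nu)), \cO_{\bP(V(\nu))}(d))$ is empty. Since $V(d\nu)^*$ is a $G$-subrepresentation, its common zero locus $Z \subset \bP(V(\nu))$ is a closed $G$-stable subscheme. If $Z$ were non-empty, then $Z$ would contain the unique closed $G$-orbit in $\bP(V(\nu))$, namely $G\cdot [v_\nu]$, and in particular it would contain the highest weight line $[v_\nu]$. So it suffices to exhibit a single section $s \in V(d\nu)^*$ with $s(v_\nu) \neq 0$.

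To do this, I would recall that the inclusion $V(d\nu)^* \hookrightarrow \Sym^d(V(\nu)^*)$ constructed in \cref{L:constr psi} is the $\bC$-linear dual of the canonical $G$-equivariant projection $p \colon \Sym^d(V(\nu)) \twoheadrightarrow V(d\nu)$ coming from the decomposition $\Sym^d(V(\nu)) = V(d\nu) \oplus W$. Under the standard identification of $\Sym^d(V(\nu)^*)$ with polynomial functions on $V(\nu)$, the evaluation of a section $s$ at the vector $v_\nu$ is precisely the pairing $\langle s, v_\nu^{\otimes d}\rangle$ between $\Sym^d(V(\nu)^*)$ and $\Sym^d(V(\nu))$. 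Restricting to $s \in V(d\nu)^*$, this pairing factors as $\langle s, p(v_\nu^{\otimes d})\rangle$.

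The key observation is now that $v_\nu^{\otimes d}$ is a highest weight vector of $\Sym^d(V(\nu))$ of weight $d\nu$, and since $V(d\nu)$ appears with multiplicity one in $\Sym^d(V(\nu))$, its image $p(v_\nu^{\otimes d})$ is a non-zero multiple of the highest weight vector $v_{d\nu} \in V(d\nu)$. Choosing for $s$ any element of $V(d\nu)^*$ not annihilating $v_{d\nu}$ (for instance, the element of the dual basis corresponding to $v_{d\nu}$) yields $s(v_\nu) \neq 0$, so $[v_\nu] \notin Z$, hence $Z = \emptyset$ and $\psi$ is surjective.

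I do not expect any serious obstacle here; the only mildly delicate point is the clean identification of the evaluation map on $\Sym^d(V(\nu)^*)$ with the natural pairing, combined with the compatibility with the canonical projection onto $V(d\nu)$, both of which are standard.
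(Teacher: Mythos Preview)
Your proposal is correct and follows essentially the same route as the paper: reduce to the highest weight point $[v_\nu]$ via the unique closed $G$-orbit, then observe that the section dual to $v_\nu^{\otimes d}=v_{d\nu}$ does not vanish there. The paper phrases the last step by naming the lowest weight vector $\xi_{-d\nu}\in V(d\nu)^*$ and computing $\xi_{-d\nu}(v_\nu)=\xi_{-\nu}(v_\nu)^d\neq 0$, which is exactly your pairing $\langle s, p(v_\nu^{\otimes d})\rangle$ written out explicitly.
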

\begin{proof}
We argue by contradiction. Assume there exists a vector $[v] \in \bP(V(\nu))$ such that $\xi(v) = 0$ for all $\xi \in V(d\nu)^*$. By equivariance and continuity, it follows that $\xi$ vanishes on the closure of the orbit $G \cdot [v] \subset \bP(V(\nu))$. Since $V(\nu)$ is irreducible, there is a unique closed $G$-orbit in $\bP(V(\nu))$, namely the orbit of the highest weight vector $[v_\nu]$.  

From this, we conclude the following:  
\begin{align}\label{eq:xiv=0all xi}  
	\xi(v_\nu) = 0 \text{ for all } \xi \in V(d\nu)^*.  
\end{align}  

However, the lowest weight vector $\xi_{-d\nu}$ of $V(d\nu)^* \subset (V(\nu)^*)^{\otimes d}$ is the dual of $v_\nu^{\otimes d} \in V(d\nu) \subset V(\nu)^{\otimes d}$, and thus satisfies  
\[ \xi_{-d\nu}(v_{d\nu}) = \xi_{-\nu}(v_\nu)^d = 1^d \neq 0 \]  
where $\xi_{-\nu} \in V(\nu)^*$ is the dual of $v_\nu \in V(\nu)$.  

This contradicts \eqref{eq:xiv=0all xi}.  
\end{proof}

\begin{corollary}\label{C:surjective onto line bundle}
	Let $\nu$ be a dominant character and $d\in\bN^*$ a positive integer.
	The $G$-equivariant morphism $\psi$ constructed in \cref{L:constr psi} 
\[\psi\colon V(d\nu)^*\otimes\cO_{\bP(V(\nu))}\to \cO_{\bP(V(\nu))}(d)\]
is a surjective morphism of vector bundles.
\end{corollary}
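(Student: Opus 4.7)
The plan is to observe that the corollary is essentially a reformulation of the preceding \cref{L:Vdnu*generates}. By the standard correspondence between $G$-equivariant bundle morphisms $V(d\nu)^* \otimes \cO_{\bP(V(\nu))} \to \cO_{\bP(V(\nu))}(d)$ and $G$-equivariant linear maps $V(d\nu)^* \to \rH^0(\bP(V(\nu)), \cO_{\bP(V(\nu))}(d))$, the morphism $\psi$ corresponds precisely to the canonical inclusion \eqref{eq:unique Vdnu* inside H0}.

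Next, I would use the fact that surjectivity of a morphism of vector bundles can be checked fiberwise, and since the target is a line bundle, this amounts to checking non-vanishing at every point. Concretely, for a point $[v] \in \bP(V(\nu))$, the fiber of $\psi$ at $[v]$ sends $\xi \otimes 1 \mapsto \xi(v^{\otimes d})$ (under the identification of $V(d\nu)^*$ as a subspace of $(V(\nu)^*)^{\otimes d}$), and this map $V(d\nu)^* \to \bC$ is surjective iff it is nonzero, iff there exists $\xi \in V(d\nu)^*$ with $\xi(v) \neq 0$ when viewed as a section of $\cO_{\bP(V(\nu))}(d)$.

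The content of \cref{L:Vdnu*generates} is exactly that the subspace $V(d\nu)^* \subset \rH^0(\bP(V(\nu)), \cO_{\bP(V(\nu))}(d))$ generates the line bundle, i.e., at every point of $\bP(V(\nu))$ there is a section in $V(d\nu)^*$ that does not vanish. Hence $\psi$ is surjective at every point, and therefore surjective as a morphism of vector bundles. I do not expect any real obstacle here; the corollary is simply the geometric repackaging of the generation statement from \cref{L:Vdnu*generates}, and the proof is essentially one paragraph once the equivalence between ``generates as a line bundle'' and ``evaluation map is surjective'' is invoked.
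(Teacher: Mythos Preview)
Your proposal is correct and follows exactly the same approach as the paper: the paper's proof is the single sentence ``It follows directly from \cref{L:Vdnu*generates},'' and you have simply unpacked why generation of the line bundle by the subspace $V(d\nu)^*$ is equivalent to fiberwise surjectivity of $\psi$.
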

\begin{proof}
	It follows directly from \cref{L:Vdnu*generates}.
\end{proof}

\begin{lemma}\label{L:existence of phi}
	Let $\nu,\lambda\in X^*(T)^+$ and $d\in \bN^*$ be such that 
	\begin{align*}\label{eq:w_0lambda=rnu}
		&\lambda =(d+1)\nu-\alpha_i, \qquad \text{for some $i \in I$ and }\\
		&\<\nu,\valpha_i\>\neq 0.
	\end{align*}
	Then there exists a unique $G$-equivariant morphism
	\[ \phi\colon V(\lambda)^*\otimes\cO_{\bP(V(\nu))}(-1)\to V(d\nu)^*\otimes\cO_{\bP(V(\nu))} \]
\end{lemma}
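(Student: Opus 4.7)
The plan is to reduce the existence question to a multiplicity computation in a tensor product of $G$-representations. By hom-tensor adjunction on $\bP(V(\nu))$ combined with the Borel--Weil identification $\rH^0(\bP(V(\nu)), \cO_{\bP(V(\nu))}(1)) \simeq V(\nu)^*$, the space of $G$-equivariant morphisms $\phi$ in the statement is canonically isomorphic to
\[
\Hom_G(V(\lambda)^*, V(d\nu)^* \otimes V(\nu)^*) \simeq \Hom_G(V(d\nu) \otimes V(\nu), V(\lambda)),
\]
so the lemma reduces to proving that $V(\lambda)$ appears with multiplicity exactly one in $V(d\nu)\otimes V(\nu)$.

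The multiplicity is computed in two steps. First, I would determine $\dim(V(d\nu)\otimes V(\nu))_\lambda$ via the Künneth-type formula \eqref{e:kunneth for reps T}. Writing $\lambda = (d+1)\nu - \alpha_i$ and decomposing $\lambda = \mu_1 + \mu_2$ with $\mu_1$ a weight of $V(d\nu)$ and $\mu_2$ a weight of $V(\nu)$, the equality $(d\nu-\mu_1) + (\nu-\mu_2) = \alpha_i$ inside $\bN\Delta$ forces the only possibilities to be $(\mu_1,\mu_2) = (d\nu-\alpha_i,\nu)$ or $(d\nu,\nu-\alpha_i)$. The hypothesis $\langle \nu, \valpha_i\rangle \neq 0$ (which also forces $\langle d\nu,\valpha_i\rangle\neq 0$) then implies via \cref{C:weight space lambda-alpha_i is of dim at most one} that each of the two resulting weight spaces is one-dimensional, so $\dim(V(d\nu)\otimes V(\nu))_\lambda = 2$.

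Second, I would peel off the contribution of larger irreducible summands. Since $(d+1)\nu - \lambda = \alpha_i$ is a single simple root, the only dominant characters $\chi$ with $\lambda \preceq \chi \preceq (d+1)\nu$ are $\lambda$ itself and $(d+1)\nu$. The Cartan summand $V((d+1)\nu) \subset V(d\nu)\otimes V(\nu)$, generated in top degree by $v_{d\nu}\otimes v_\nu$, appears with multiplicity one, and its weight-$\lambda$ space has dimension one by the same application of \cref{C:weight space lambda-alpha_i is of dim at most one} (now to $V((d+1)\nu)$, again using $\langle \nu, \valpha_i\rangle \neq 0$). Subtracting, $V(\lambda)$ occurs with multiplicity $2-1 = 1$, giving existence and uniqueness (up to scalar) of $\phi$.

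I do not expect a serious obstacle: once the adjunction is in place, the entire computation is driven by the partial order on weights and \cref{C:weight space lambda-alpha_i is of dim at most one}. The one place requiring care is the verification that $(d+1)\nu$ is the unique dominant weight strictly larger than $\lambda$ in the tensor product — which is exactly where the hypothesis that $\lambda$ and $(d+1)\nu$ differ by a single simple root enters in a crucial way.
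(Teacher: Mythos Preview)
Your proposal is correct and shares with the paper's proof both the adjunction reduction to primitive vectors of weight $\lambda$ in $V(d\nu)\otimes V(\nu)$ and the K\"unneth computation $\dim(V(d\nu)\otimes V(\nu))_\lambda = 2$. The divergence is in the final step. The paper argues directly on the $2$-dimensional weight space: the operators $e_j$ for $j\neq i$ act by zero (since $\nu-\alpha_i+\alpha_j$ is not a weight of $V(\nu)$), while $e_i$ maps this space onto the one-dimensional weight space $(V(d\nu)\otimes V(\nu))_{(d+1)\nu}$, so the space of primitive vectors is its one-dimensional kernel. You instead peel off the Cartan component $V((d+1)\nu)$, observe that it accounts for one of the two dimensions in weight $\lambda$, and conclude that the only remaining possible summand $V(\lambda)$ must have multiplicity one. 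Your route is a bit more structural and avoids touching the Chevalley generators explicitly; the paper's route is a hands-on rank computation. Both rely in an essential way on the fact that $(d+1)\nu - \lambda$ is a \emph{single} simple root, which in your argument is what pins down the interval $[\lambda,(d+1)\nu]$ to two points, and in the paper's argument is what makes all $e_j$ with $j\neq i$ act trivially.
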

\begin{proof}
	Notice that such a morphism $\phi$ is uniquely determined by a $G$-equivariant morphism $V(\lambda)^*\to V(d\nu)^*\otimes V(\nu)^*$ which in turn corresponds bijectively (see \cref{L:basic adjunction}) to primitive vectors in $V(d\nu)^*\otimes V(\nu)^*$ of weight $-w_0\lambda$ or, equivalently, to primitive vectors in $V(d\nu)\otimes V(\nu)$ of weight $\lambda$.
	
	We therefore need to show that the representation $V(d\nu)\otimes V(\nu)$ contains a unique (up to scalar) non-zero primitive vector of weight $\lambda$.
	In formula, we want to show the equality
	\[ \dim(V(d\nu)\otimes V(\nu))^\prim_\lambda = 1.\]
	This will follow from $\dim (V(d\nu)\otimes V(\nu))_\lambda = 2$ which is implied (actually equivalent) to the hypothesis $\<\nu,\valpha_i\>\neq 0$.

	Similar to the formula \eqref{eq:weight_space_tensor_prod}, the $\lambda$ weight space of the tensor product is given by a Künneth like formula 
	\begin{equation}\label{eq:weight spaces of Vdnu tensor Vnu}
		 (V(d\nu)\otimes V(\nu))_\lambda = \bigoplus_{\gamma}V(d\nu)_{\lambda-\gamma}\otimes V(\nu)_{\gamma}
	\end{equation}
	where $\gamma$ runs over all characters $X^*(T)$
	(but only finitely many contribute).

	Since the weights of the module $V(\nu)$ are smaller or equal to $\nu$ and similarly for $V(d\nu)$, see \cref{C:weights of irred rep}, we get the following inequalities for the weights $\gamma$ that contribute to \eqref{eq:weight spaces of Vdnu tensor Vnu}
	\begin{align*}
		\gamma\preceq\nu \text{ and }
		\lambda-\gamma\preceq d\nu
	\end{align*}
	which imply, after plugging in the expression for $\lambda$, the inequalities
	\[ \nu-\alpha_i\preceq\gamma\preceq\nu.\]
	We finally deduce that
	\[ \gamma=\nu \text{ or }\gamma = \nu-\alpha_i.\]
	
	Therefore the formula \eqref{eq:weight spaces of Vdnu tensor Vnu} simplifies to
	\begin{equation}\label{eq:VdnutensorVnu dim two}
	(V(d\nu)\otimes V(\nu))_\lambda = V(d\nu)_{d\nu}\otimes V(\nu)_{\nu-\alpha_{i}}\oplus V(d\nu)_{d\nu-\alpha_{i}}\otimes V(\nu)_{\nu}
	\end{equation} 
	which is of dimension two as implied by \cref{C:weight space lambda-alpha_i is of dim at most one} thanks to the hypothesis $\<\nu,\valpha_i\>\neq 0$.
	
	The space of primitive vectors inside $(V(d\nu)\otimes V(\nu))_\lambda$ is, by \cref{L:another equiv primitive}, the joint kernel of
	\begin{equation}\label{eq:ej act on lambda weight space}
		e_j\acts (V(d\nu)\otimes V(\nu))_\lambda \text{ for }j\in I,
	\end{equation}
	or, equivalently, using the expression \eqref{eq:VdnutensorVnu dim two}, the joint kernel of
	\begin{equation}
		e_j\otimes \id+\id\otimes e_j\acts V(d\nu)_{d\nu}\otimes V(\nu)_{\nu-\alpha_{i}}\oplus V(d\nu)_{d\nu-\alpha_{i}}\otimes V(\nu)_{\nu} \text{ for } j \in J.
	\end{equation}

	Since $\nu-\alpha_i+\alpha_j$ is not a weight of $V(\nu)$ for $i\neq j$ and similarly for $d\nu-\alpha_i+\alpha_j$ (see \cref{C:weights of irred rep}) 
	we deduce that the operators $e_j$ from \eqref{eq:ej act on lambda weight space} for $j\neq i$ act by zero.
	Therefore the space of primitive vectors $(V(d\nu)\otimes V(\nu))_\lambda^\prim$ is precisely the kernel of
	\begin{equation}\label{eq:e_i on tens prod dim 1 weight spaces}
		e_i\otimes\id+\id\otimes e_i\acts V(d\nu)_{d\nu}\otimes V(\nu)_{\nu-\alpha_{i}}\oplus V(d\nu)_{d\nu-\alpha_{i}}\otimes V(\nu)_{\nu}
	\end{equation}
	The action of $e_i$ sends $V(\nu)_{\nu-\alpha_i}$ bijectively to $V(\nu)_\nu$ and is zero on $V(\nu)_\nu$; similarly for $d\nu$.
	Hence the image of the operator \eqref{eq:e_i on tens prod dim 1 weight spaces} must be $(V(d\nu)\otimes V(\nu))_{(d+1)\nu}$ which is of dimension one. 
	We can conclude that the kernel of \eqref{eq:e_i on tens prod dim 1 weight spaces} is of dimension one.
	In other words, the space $(V(d\nu)\otimes V(\nu))_\lambda^\prim$ contains a unique (up to scalar) non-zero primitive weight vector.
			
	In conclusion there is a unique line of highest weight $\lambda$ in $V(d\nu)\otimes V(\nu)$ and this settles the existence and uniqueness from the statement.
\end{proof}

\begin{lemma}\label{L:no map Vlambda to Symr}
	Let $\nu$ be a dominant character, $r\ge 1$  a positive integer and $i\in I$ such that $\lambda:= r\nu-\alpha_i$ is also a dominant character.
	Then 
	\[ \Hom_{G}(V(\lambda),\Sym^r(V(\nu))) = 0 = \Hom_G(\Sym^r(V(\nu)),V(\lambda)).\]
\end{lemma}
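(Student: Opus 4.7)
The plan is to reduce everything to a weight-space computation via Frobenius reciprocity. By \cref{L:basic adjunction}, $\Hom_G(V(\lambda), \Sym^r(V(\nu))) = \Sym^r(V(\nu))^{\prim}_{\lambda}$, so it suffices to prove that this space of primitive vectors is zero. My approach is to show that the ambient weight space $\Sym^r(V(\nu))_\lambda$ is already one-dimensional, and that its generator fails to be primitive.

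To identify this weight space, I use that $\lambda = r\nu - \alpha_i$ being dominant forces $\<\nu,\valpha_i\>\ge 2/r$, hence $\<\nu,\valpha_i\>\ge 1$ as an integer; by \cref{C:weight space lambda-alpha_i is of dim at most one}, $V(\nu)_{\nu - \alpha_i}$ is then one-dimensional, spanned by some $w$. Since the weights of $V(\nu)$ lie in $\nu - \bN\Delta$ (\cref{C:weights of irred rep}) and $\alpha_i$ is a simple root, the only way to decompose $r\nu - \alpha_i$ as a sum of $r$ weights of $V(\nu)$ is to use $r-1$ copies of $\nu$ and one copy of $\nu - \alpha_i$. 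Applying the K\"unneth formula \eqref{e:kunneth for reps T} to $V(\nu)^{\otimes r}$ and then symmetrizing, I obtain that $\Sym^r(V(\nu))_\lambda$ is the line spanned by $v_\nu^{r-1}w$.

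Next, I test primitivity using \cref{L:another equiv primitive} by applying each $e_j$ to $v_\nu^{r-1}w$. For $j\neq i$, the element $\nu - \alpha_i + \alpha_j$ is not of the form $\nu - \sum_k n_k \alpha_k$ with $n_k \ge 0$ (since $\alpha_i - \alpha_j\notin\bN\Delta$), so it is not a weight of $V(\nu)$ and consequently $e_j w = 0$; combined with $e_j v_\nu = 0$ this gives $e_j(v_\nu^{r-1}w) = 0$. For $j=i$, the $\fsl_2(\alpha_i)$-calculation $e_i f_i v_\nu = \<\nu,\valpha_i\> v_\nu \neq 0$ forces $e_i w$ to be a nonzero multiple of $v_\nu$; hence $e_i(v_\nu^{r-1}w)$ is a nonzero multiple of $v_\nu^r$, and $v_\nu^{r-1}w$ is not primitive. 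This yields $\Sym^r(V(\nu))^{\prim}_\lambda = 0$, which is the first vanishing.

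The second equality is a formal consequence of complete reducibility: decomposing $\Sym^r(V(\nu))$ into irreducible summands, the multiplicity of $V(\lambda)$ can be read off as the dimension of either Hom space, so both vanish simultaneously. The main (mild) obstacle is the bookkeeping that pins down $\Sym^r(V(\nu))_\lambda$ to a single line coming from $v_\nu^{r-1} w$; once this is in place, the non-primitivity of this vector is a one-line $\fsl_2(\alpha_i)$-computation.
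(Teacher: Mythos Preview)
Your proof is correct and follows essentially the same route as the paper: reduce via \cref{L:basic adjunction} to showing $\Sym^r(V(\nu))^\prim_\lambda = 0$, compute that the $\lambda$-weight space is one-dimensional, and verify its generator is not primitive. The only differences are cosmetic---you rule out $\<\nu,\valpha_i\>=0$ upfront using the dominance of $\lambda$, and you test non-primitivity by applying $e_i$ directly to $v_\nu^{r-1}w$, whereas the paper instead observes that this generator equals $f_i v_\nu^{\otimes r}$ and hence already lies in the irreducible summand $V(r\nu)\subset\Sym^r(V(\nu))$.
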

\begin{proof}
	The two statements are equivalent since the category of representations of $G$ is semisimple.
	
	We need to show that $\Sym^r(V(\nu))$ does not contain an irreducible summand of the form $V(r\nu-\alpha_i)$.
	From \cref{L:basic adjunction} this is equivalent to showing that the space of primitive elements $\Sym^r(V(\nu))^\prim_\lambda$ of weight $\lambda$ is zero.
	
	We will write $V$ for $V(\nu)$ to simplify the notation.
	
	Since $\Sym^r(V)$ are the $\fS_r$-invariants in the representation $V^{\otimes r}$, let us first look at the $\lambda$ weight space of $V^{\otimes r}$.
	By the Künneth-type formula \eqref{e:kunneth for reps T}, we have 
	\begin{equation}\label{eq:weight_space_tensor_prod}
	(V^{\otimes r})_{\lambda} = \bigoplus_{\mu_1,\dots,\mu_r} V_{\mu_1}\otimes\dots\otimes V_{\mu_r} 
	\end{equation} 
	where the sum runs over all characters $\mu_1,\dots,\mu_r$ that sum to $\lambda$.
	Notice that by \cref{C:weights of irred rep} we have $V_{\mu_1}=0$ for a character $\mu_1$ that is not smaller than $\nu$.
	
	Since $\lambda = r\nu-\alpha_i$ the above formula simplifies to
	\[ (V^{\otimes r})_{\lambda} = \bigoplus_{l=1}^rV_\nu\otimes\dots\otimes 
	\underbrace{V_{\nu-\alpha_i}}_{\text{position }l}\otimes\dots\otimes V_\nu.\]
	
	Taking $\fS_r$ invariants we obtain
	\[ (\Sym^rV)_\lambda = \left(\bigoplus_{l=1}^rV_\nu\otimes\dots\otimes 
	\underbrace{V_{\nu-\alpha_i}}_{\text{position }l}\otimes\dots\otimes V_\nu\right)^{\fS_r} \]
	where $\fS_r$ acts by permuting the tensor factors.
	
	If $V_{\nu-\alpha_i}=0$, i.e., if $\<\nu,\valpha_i\> = 0$ (see \cref{C:weight space lambda-alpha_i is of dim at most one}), then there is nothing to show.
	
	Otherwise, $\dim V_{\nu-\alpha_i}=1$ by \cref{C:weight space lambda-alpha_i is of dim at most one}. 

		The weight spaces $V_\nu$ and $V_{\nu-\alpha_i}$ are one dimensional. Call $v_\nu$ and $v_{\nu-\alpha_i}$ some generators. 
		We obtain
	\[ \Sym^r(V)_\lambda = \bC \left\{\sum_{l=1}^r v_\nu\otimes\dots\otimes\underbrace{v_{\nu-\alpha_i}}_{\text{position }l}\otimes\dots\otimes v_\nu\right\}\]
	which proves that this weight space is one dimensional.

	We are now ready to conclude: the representation $V(r\nu)$ is already contained in $\Sym^r(V)$ and hence contains the primitive vector $v_\nu^{\otimes r}$ of weight $r\nu$.
	Applying the operator $f_i\in\fsl_2(\alpha_i)\subset \Lie(G)$ to it gives a non-zero element of $\Sym^r(V)_\lambda$ which is one dimensional. 
	Hence $\Sym^r(V)_\lambda$ does not contain primitive vectors which is what we wanted to prove.
\end{proof}

\begin{lemma}\label{L:psiphi is zero}
	Let $\nu$ be a dominant weight, $\alpha_i$ a simple root and $d\ge 1$ a positive integer.
	Assume that $\<\nu,\valpha_i\>\neq 0$ and put $\lambda = (d+1)\nu-\alpha_i$.
	
	Then the composition of $\psi$ constructed in \cref{L:constr psi} and of $\phi$ constructed in \cref{L:existence of phi} 
	\begin{align}\label{eq:phipsi sequence}
		V(\lambda)^*\otimes\cO_{\bP(V(\nu))}(-1)\stackrel\phi\to V(d\nu)^*\otimes\cO_{\bP(V(\nu))}\stackrel{\psi}{\to}\cO_{\bP(V(\nu))}(d)
	\end{align}
	is the zero morphism.
\end{lemma}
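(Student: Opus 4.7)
My plan is to deduce the vanishing of $\psi\phi$ directly from \cref{L:no map Vlambda to Symr} by computing the ambient Hom space of $G$-equivariant morphisms between the source and target bundles. The composition $\psi\phi$ is a $G$-equivariant morphism of vector bundles
$$V(\lambda)^*\otimes\cO_{\bP(V(\nu))}(-1)\to\cO_{\bP(V(\nu))}(d),$$
and after twisting with $\cO(1)$, the tensor-Hom adjunction shows that such a morphism is the same data as a $G$-equivariant linear map $V(\lambda)^*\to\rH^0(\bP(V(\nu)),\cO(d+1))=\Sym^{d+1}(V(\nu)^*)$, i.e.\ an element of $\Hom_G(V(\lambda)^*,\Sym^{d+1}(V(\nu)^*))$.

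By semisimplicity of $\Rep(G)$, dualizing source and target identifies this with $\Hom_G(\Sym^{d+1}(V(\nu)),V(\lambda))$. Since $\lambda=(d+1)\nu-\alpha_i$ by hypothesis, this Hom space vanishes by \cref{L:no map Vlambda to Symr} applied with $r=d+1$ and the simple root $\alpha_i$. Hence there is no non-zero $G$-equivariant morphism $V(\lambda)^*\otimes\cO_{\bP(V(\nu))}(-1)\to\cO_{\bP(V(\nu))}(d)$ at all, and in particular $\psi\phi=0$.

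The only preliminary verification required is that $\lambda=(d+1)\nu-\alpha_i$ is a dominant character, since this is part of the hypothesis of \cref{L:no map Vlambda to Symr}. I would check this by computing $\<\lambda,\valpha_j\>$ for each $j\in I$: for $j\neq i$ the non-positivity of $\<\alpha_i,\valpha_j\>$ gives $\<\lambda,\valpha_j\>\geq(d+1)\<\nu,\valpha_j\>\geq 0$, and for $j=i$ the assumption $\<\nu,\valpha_i\>\geq 1$ together with $d\geq 1$ yields $\<\lambda,\valpha_i\>=(d+1)\<\nu,\valpha_i\>-2\geq d-1\geq 0$. Since this verification is routine and the rest is a direct invocation of the previously established lemma, I do not foresee any real obstacle: the substantive work has already been carried out in \cref{L:no map Vlambda to Symr}.
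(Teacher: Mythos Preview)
Your argument is correct and follows essentially the same route as the paper: both reduce the vanishing of $\psi\phi$ to the $\Hom_G$-vanishing of \cref{L:no map Vlambda to Symr} after twisting by $\cO(1)$ and passing to global sections. The only cosmetic difference is that the paper phrases the reduction via the principle ``a morphism between globally generated sheaves vanishes iff it does on global sections'' and then observes that the induced $G$-map $V(\lambda)^*\to\Sym^{d+1}(V(\nu)^*)$ must be zero, whereas you compute directly that the entire space of $G$-equivariant bundle maps $V(\lambda)^*\otimes\cO(-1)\to\cO(d)$ is zero; your verification that $\lambda$ is dominant is a useful detail the paper leaves implicit.
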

\begin{proof}
	We use the following general fact: a morphism between coherent sheaves generated by global sections is zero if and only if it is so on global sections.
	
	We tensor by $\cO(1)$ the sequence \eqref{eq:phipsi sequence} and notice that all the resulting vector bundles are generated by global sections. 
	Using the above remark we need to show that the following composition is zero
	\[ V(\lambda)^*\to V(d\nu)^*\otimes V(\nu)^*\to \Sym^{d+1}(V(\nu)^*).\]
	This follows at once from \cref{L:no map Vlambda to Symr} by taking duals.
\end{proof}

We are now ready to finish the proof of our main \cref{T:main theorem}.
\begin{proof}[Proof of the converse implication in \cref{T:main theorem}]
	Suppose $\lambda,\mu,\nu$ satisfy the conditions \ref{thm-c}, \ref{thm-a} 
        and \ref{thm-b}.
	So there exists a positive integer $d\ge 1$ such that $\lambda = (d+1)\nu-\alpha_{i}$ and $\mu = d\nu$.
	
	A $G$-equivariant morphism 
	\[V(\nu)\to \Hom(V(\mu),V(\lambda))\] 
	as in the \cref{T:main theorem} is equivalent to a $G$-equivariant morphism of vector bundles
	\[ \phi\colon V(\lambda)^*\otimes\cO_{\bP(V(\nu))}\to V(d\nu)^*\otimes \cO_{\bP(V(\nu))}(1) \]
	whose existence and uniqueness is guaranteed by 
	\cref{L:existence of phi}.
	
	Applying \cref{L:psiphi is zero} we obtain that $\phi$ has generic corank at least one.
	The action of $G$ on $\bP(V(\nu))$ has a unique closed orbit and the corank of $\phi$ is maximal on this closed orbit.
	In order to show that $\phi$ has constant corank one it is therefore enough to see that on the closed orbit the corank is one.
	
	The restriction of $\phi$ to the closed orbit is a morphism 
	\[ \phi'\colon V(\lambda)^*\otimes \bC(\nu)\to V(\mu)^* \]
	of $P(\nu)$-representations, where $P(\nu):=\Stab_G([v_\nu])$ is the parabolic subgroup associated to the character $\nu$.
	In particular $\phi'$ is a morphism of lowest weight $B$-modules (see \cref{D:lowest weight}).
	
	Arguing as we did for the morphism \eqref{eq:phi' morph B mod} and using the hypothesis $\mu\in\bN^*\omega_i$ we have a decomposition
	\begin{equation}\label{eq:Vmu* decomposed}
		V(\mu)^* = V(\mu)^*_{-\mu}\oplus V(\mu)^*_{\succeq-\mu+\alpha_i}
	\end{equation}
		On the one hand, the image of the morphism $\phi'$ is a lowest weight $B$-submodule of $V(\mu)^*$ of lowest weight $-\lambda+\nu = -\mu+\alpha_i$, so we have an inclusion
	\[ \Ima(\phi')\subseteq V(\mu)^*_{\succeq -\mu+\alpha_i}.\]
	
	On the other hand, by \cref{C:weight space lambda-alpha_i is of dim at most one}, we have $\dim V(\mu)^*_{-\mu+\alpha_i}=1$ and since $V(\mu)^*_{-\mu+\alpha_i}\subset W$, \cref{L:submodule bigger than -mu} implies the equality 
	\begin{equation}\label{eq:image phi'}
		\Ima(\phi') = V(\mu)^*_{\succeq-\mu+\alpha_i}.
	\end{equation}
	Given that $\dim V(\mu)^*_{-\mu}=1$, the two formulae \eqref{eq:Vmu* decomposed}, \eqref{eq:image phi'} imply that $\phi'$ has corank one, thus concluding the proof.
\end{proof}

\section{Remarks and questions}\label{section:concluding remarks}
\subsection{}
Let us recall the following question posed by Landsberg and Manivel in \cite[Question 4]{landsberg-manivel} (see also our discussion in the 
introduction):

\begin{question*}	
For which irreducible representations $V$, $V_1$ and $V_2$ of a reductive group such that $V\subset \Hom(V_1,V_2)$ is the space of matrices of constant rank?
\end{question*}

They suggest that the smallest summand $V\subset \Hom(V_1,V_2)$ could be a possible answer, but---as we mentioned in the introduction---this fails already for $\SL_2$ and $\dim(V)\ge 3$.
In the next subsection we give another such non-example.

\subsection{Wedge powers}
For an $n$-dimensional vector space $V$ and for integers $r,k\ge 1$ such that $r+k<n$ consider the following $\GL(V)$-equivariant space of matrices
\[\theta_{k,r}\colon\Lambda^kV\to \Hom(\Lambda^rV, \Lambda^{r+k}V).\]
For $k=1$ this space is clearly of constant rank but how about for $k\ge 2$?

The vector spaces $\Lambda^kV$, $\Lambda^rV$, $\Lambda^{r+k}V$ are irreducible representations of $\GL(V)$ (actually they are fundamental representations) and $\Lambda^kV$ is minimal in $\Hom(\Lambda^rV,\Lambda^{r+k}V)$ because its highest weight, a fundamental weight, is minimal in the poset of dominant weights.
However, we have
\begin{proposition}\label{P:Lambda not ct rank}
	The space $\theta_{k,r}$ is not of constant rank if $k\ge 2$.
\end{proposition}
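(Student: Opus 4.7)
The plan is to exhibit two elements of $\Lambda^k V$ at which the map $\theta_{k,r}$ achieves different ranks, which will show non-constant rank. Conceptually, this amounts to contrasting the closed $\GL(V)$-orbit of decomposable $k$-forms with a non-decomposable element.

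At a decomposable element $\omega_0 = v_1 \wedge \cdots \wedge v_k$, extend $v_1, \ldots, v_k$ to a basis of $V$. Then $\xi \wedge \omega_0 = 0$ if and only if every basis monomial appearing in $\xi$ contains at least one of $v_1, \ldots, v_k$. This gives $\dim \ker \theta_{k,r}(\omega_0) = \binom{n}{r} - \binom{n-k}{r}$, i.e.\ $\rank \theta_{k,r}(\omega_0) = \binom{n-k}{r}$.

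For a non-decomposable element, the hypotheses $k \geq 2$ and $n \geq r+k+1 \geq k+2$ allow one to split $V = U \oplus W \oplus Y$ with $\dim U = k-2$, $\dim W = 4$, $\dim Y = n-k-2 \geq 0$. Let $u$ generate the line $\Lambda^{k-2}U$, let $\sigma \in \Lambda^2 W$ be a non-decomposable (symplectic) 2-form, and set $\omega_1 := u \wedge \sigma$. A quick check shows $\{v \in V : v \wedge \omega_1 = 0\} = U$ has dimension $k-2 < k$, so $\omega_1$ is indeed non-decomposable. Using the Künneth-type decomposition $\Lambda^r V = \bigoplus_{a+b+c=r}\Lambda^a U \otimes \Lambda^b W \otimes \Lambda^c Y$, multiplication by $\omega_1$ annihilates the summands with $a \geq 1$ (because $\Lambda^{a+k-2}U = 0$) and, up to a global sign, acts on the $a=0$ summand as $u \otimes (\wedge\sigma) \otimes \id$. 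Since the images for distinct $(b,c)$ land in distinct summands of $\Lambda^{r+k}V$, one obtains
\[
\rank \theta_{k,r}(\omega_1) \;=\; \sum_{b+c=r}\rank\bigl(\wedge\sigma\colon \Lambda^b W \to \Lambda^{b+2}W\bigr)\cdot\binom{n-k-2}{c}.
\]

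A direct Lefschetz-style computation on the $4$-dimensional symplectic space gives the ranks of $\wedge\sigma$ on $\Lambda^b W$ for $b=0,1,2,3,4$ as $1,4,1,0,0$ respectively. Combining this with the Vandermonde identity $\binom{n-k}{r} = \binom{n-k-2}{r} + 2\binom{n-k-2}{r-1} + \binom{n-k-2}{r-2}$ yields
\[
\rank \theta_{k,r}(\omega_1) - \rank \theta_{k,r}(\omega_0) \;=\; 2\binom{n-k-2}{r-1},
\]
which is strictly positive under the standing hypotheses $r \geq 1$ and $r+k<n$. The only delicate bookkeeping is verifying that the wedge really splits cleanly across the three tensor factors (tracking shuffle-product signs), but the strategy itself—compare the closed Grassmannian orbit with an explicit $\omega_1$ of the form $u \wedge \sigma$—is transparent and avoids heavy machinery.
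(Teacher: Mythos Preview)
Your argument is correct. Both you and the paper prove the proposition by exhibiting two elements of $\Lambda^k V$ whose wedge maps have different ranks, and both compute the ranks via a Künneth splitting of $\Lambda^r V$ along a direct-sum decomposition of $V$. The difference lies in the choice of the second (non-decomposable) element.

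The paper compares the decomposable blade $e_1\wedge\cdots\wedge e_k$ with the sum of two blades $e_1\wedge\cdots\wedge e_k + e_{n-k+1}\wedge\cdots\wedge e_n$, splitting $V=V_1\oplus V_2\oplus V_3$ with $\dim V_1=\dim V_3=k$. This forces a case distinction according to whether $2k\le n$ or $2k>n$ (i.e.\ whether the two blades can be taken with disjoint support), and in each case the kernel of $E+E'$ is described block-by-block. Your choice $\omega_1 = u\wedge\sigma$ with $u\in\Lambda^{k-2}U$ and $\sigma$ a rank-$4$ symplectic form on a $4$-dimensional $W$ avoids this dichotomy entirely: the decomposition $V=U\oplus W\oplus Y$ with $\dim U=k-2$, $\dim W=4$ is available uniformly as soon as $n\ge k+2$, which follows from $r\ge 1$ and $r+k<n$. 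The Lefschetz computation of $\wedge\sigma$ on $\Lambda^\bullet W$ then gives the rank jump $2\binom{n-k-2}{r-1}$ in one stroke. For $k=2$ the two constructions coincide (your $\omega_1$ is exactly the paper's $E+E'$ after relabelling), but for $k\ge 3$ yours is genuinely different and arguably cleaner. The paper's element, on the other hand, is slightly more ``generic'' in the sense that it lies in a higher $\GL(V)$-orbit when $k\ge 3$, which could be relevant if one wanted finer information about the rank stratification.
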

\begin{proof}
Pick a basis $e_1,\dots,e_n$ of $V$ and consider the following two elements of $\Hom(\Lambda^rV,\Lambda^{r+k}V)$:
\begin{align*}
	E&:=\theta_{k,r}(e_{1,\dots,k})\\
	E'&:=\theta_{k,r}(e_{n-k+1,\dots,n})
\end{align*}
The idea is to show that $\dim\ker(E)$ is strictly bigger than $\dim\ker(E+E')$. This implies that $\theta_{k,r}$ is not of constant rank.

There are two cases to consider: 
\begin{enumerate}
	\item case 1: $2k\le n$,
	\item case 2: $2k>n$.
\end{enumerate}

Both situations use the same strategy for computation that we sketch below (for the first case).
Decompose $V$ into a direct sum as follows
\[ V=\<e_1,\dots,e_k\>\oplus \<e_{k+1},\dots,e_{n-k}\>\oplus\<e_{n-k+1},\dots,e_n\>=:V_1\oplus V_2\oplus V_3.\]
For natural numbers $a,b,c$ put
\[W_{a,b,c}:=\Lambda^aV_1\otimes\Lambda^bV_2\otimes\Lambda^c V_3.\]
Then we have the decomposition
\[ \Lambda^r V = \bigoplus_{a+b+c = r} W_{a,b,c}.\]

This decomposition is compatible with the operators $E$ and $E'$ in the following sense
\begin{align}
	E(W_{a,b,c}) &= \left\{\begin{array}{ll}
		0 & \eef a\neq 0,\\
		W_{k,b,c} & \eef a=0,
	\end{array}\right.\\
	E'(W_{a,b,c})& = \left\{
	\begin{array}{ll}
		0 & \eef c\neq 0,\\
		W_{a,b,k} & \eef c=0.
	\end{array}
	\right.
\end{align}
We deduce an inclusion
\[\bigoplus_{\substack{a+b+c=r\\ a\neq0\neq c}} W_{a,b,c}\subseteq\ker(E+E').\]
The rest of the kernel is isomorphic to $W_{0,r-k,k}$ sitting (sign) diagonally in $W_{k,r-k,0}\oplus W_{0,r-k,k}$.
In any case, we get $\dim\ker(E)>\dim\ker(E+E')$.
\end{proof}

\subsection{PRV components?}
It would be highly desirable to identify some conjectural candidates $V$, $V_1$, $V_2$ for which the space $V\hookrightarrow\Hom(V_1,V_2)$ is of constant rank. Although the suggestion by Manivel and Landsberg to consider the smallest summand does not work in general, it may still be valid in certain special cases, depending on the highest weights of the irreducible representations involved.

It is important to note that the irreducible summands in $\Hom(V_1,V_2)$ appear, in general, with multiplicity. If the multiplicity is not one, then there is ambiguity in selecting an irreducible summand and the answer to the question will depend on this choice.
However, in some special cases, it is possible to single out some canonical irreducible subrepresentations, called the PRV components (see \cite{Kumar-PRV}).
Even though these components may also appear with higher multiplicity, there is a way to select a canonical summand.

Switching back to highest weight notation, given two irreducible modules $V(\lambda)$ and $V(\mu)$, the PRV conjecture (proved in \cite{Kumar-PRV}) claims that for any $w$ in the Weyl group $W$, the irreducible representation $V(\overline{\mu-w\lambda})$ of extremal weight $\mu-w\lambda$ appears with non-zero multiplicity in the representation $\Hom(V(\lambda),V(\mu))$.
There is a strengthened version (see loc.cit.) suggested by Kostant claiming that inside a natural submodule $M_{\lambda,\mu}^w\subset \Hom(V(\lambda),V(\mu))$ the multiplicity is actually one!
We denote the inclusion of this well defined summand by
\begin{equation}	\label{eq:PRV comp}
\varphi_{\lambda,\mu}^w\colon V(\overline{\mu-w\lambda})\hookrightarrow\Hom(V(\lambda),V(\mu)).
\end{equation} 
The smallest summand is obtained by taking $w=1$ and the biggest summand by taking $w=w_0$ (actually they have multiplicity one in the full Hom representation).

In line with Landsberg--Manivel's suggestion, we propose the concrete question
\begin{question}
	For which dominant weights $\lambda,\mu$ and Weyl group element $w\in W$ is the canonical PRV component \eqref{eq:PRV comp}
	of constant rank?
\end{question}

\subsection{The case of $\SL_2$}
We have attempted to fully understand the situation in the case of $\SL_2$, but a definite answer eluded us. For clarity, let us give some details.

For a positive integer $n\in\bN$ we denote by $V(n)$ the irreducible representation of $\SL_2$ of highest weight $n$. 
The Clebsch--Gordan formula for $n\ge m$ reads:
\begin{equation}\label{eq:clebsch-gordan}
	\Hom(V(m),V(n)) \simeq \bigoplus_{k=0}^m V(n-m+2k).
\end{equation} 
The smallest summand $V(n-m)$ is of maximal rank as can be seen by exhibiting its highest weight vector inside the Hom space.

Actually, it is not hard to see that the highest weight vector of $V(n-m+2k)$ in
\eqref{eq:clebsch-gordan} is of the form:

  \[ \begin{pmatrix}
	0&\dots&&&&&&&&0\\
	\vdots&&&&&&&&&\vdots\\
	&&&&&&&&&0\\
	&&&&&&&&&(-1)^{m-k}\\
	&&&&&&&&\iddots&0\\
	&&&&&&&\qquad\quad\iddots&&0\\
	&&&&&&&(-1)^i{m-k\choose i}&0&\vdots\\
	&&&&&&\iddots&&\vdots&\vdots\\
	0&\cdots&&&0&1&0&\cdots&\cdots&0
\end{pmatrix} \in M_{m+1,n+1}(\bC)\]

Therefore the corank of such a
matrix is $k$. 
The matrix corresponding to the lowest weight vector will be the rotation of the above matrix by $180^\circ$.
In order for their sum to still have corank $k$, the following inequality needs to hold
\[ 2m-n+1\ge 3k.\]
We can therefore deduce
\begin{corollary}
	Let $m\le n$ be natural numbers and $k$ be an integer between $0$ and $m$.
	If $3k>2m-n+1$, then the summand
	\[ V(n-m+2k)\hookrightarrow\Hom(V(m),V(n)) \text{ is not of constant rank}.\]
\end{corollary}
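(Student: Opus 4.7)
The plan is to exhibit two elements of $V(n-m+2k)$ whose sum has rank strictly larger than $m-k+1$, thereby contradicting constant rank of the summand. I would first take the highest weight vector $A$ of $V(n-m+2k)$ displayed in the paragraph preceding the corollary: by the computation there, $\rank(A) = m-k+1$. Next I would take $A'$ to be the $180^\circ$-rotated matrix. Since the longest element $w_0$ of the Weyl group of $\SL_2$ acts on $\Hom(V(m),V(n))$ by simultaneously reversing row and column orders (up to signs), and sends highest weight vectors of irreducible summands to lowest weight vectors, $A'$ represents (up to sign) the lowest weight vector of $V(n-m+2k)$; in particular $A' \in V(n-m+2k)$ and $\rank(A')=m-k+1$. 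If $V(n-m+2k)$ were of constant rank, then the nonzero element $A+A'$ would have rank $m-k+1$ as well.

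The heart of the argument is computing $\rank(A+A')$ and showing it strictly exceeds $m-k+1$. I would analyse the positions of the non-zero entries: those of $A$ and $A'$ lie on two anti-diagonals, each row (resp.\ column) of $A+A'$ has at most two non-zero entries, and the ``overlap rows'' where both $A$ and $A'$ contribute form $\{k+1,\dots,m-k+1\}$ while the ``overlap columns'' form $\{n-m+k+1,\dots,m-k+1\}$ (empty when the lower bound exceeds the upper). Classifying the non-zero columns of $A+A'$ as single-entry (from only $A$ or only $A'$) or double-entry (from both), and tracking the rank-reducing dependencies among them, should yield the clean formula
\[
\rank(A+A') \;=\; 2(m-k+1)\;-\;(2m-n-2k+1)^+\;-\;(m-2k+1)^+,
\]
with $(x)^+:=\max(x,0)$. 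A brief case analysis on the signs of these two expressions then confirms that $\rank(A+A') > m-k+1$ precisely when $3k>2m-n+1$.

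The main obstacle is justifying the rank formula. Concretely, one must show that under the hypothesis each overlap row $r$ contributes exactly one rank-reducing dependency, namely the two corresponding single-entry columns (one from $A$, one from $A'$) being both proportional to the basis vector $e_r$, and that no additional accidental dependencies arise. The hypothesis $3k>2m-n+1$ is used precisely at the step where one needs to ensure that, for each overlap row, both the $A$-column and the $A'$-column at that row are genuinely single-entry columns of $A+A'$---otherwise they would be absorbed into a double-entry overlap column and the dependency count would change. One should also verify that the double-entry columns are supported in rows disjoint from the single-entry ones, so they contribute independent directions to the column space.
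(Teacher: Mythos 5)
Your strategy is the same as the paper's (which only sketches this step): compare the rank of the highest weight matrix $A$ with that of $A+A'$, where $A'$ is its $180^\circ$ rotation realizing the lowest weight vector. The combinatorics of overlap rows and columns is set up correctly, and under the hypothesis your rank formula does hold: $3k>2m-n+1$ forces $3k>2m-n$, which is exactly the condition guaranteeing that every overlap row meets only single-entry columns and every overlap column meets only single-entry rows, so the support of $A+A'$ breaks into pieces with at most two entries and $\rank(A+A')=2(m-k+1)-(2m-n-2k+1)^+-(m-2k+1)^+$. But the formula is \emph{not} an unconditional identity, so your concluding ``precisely when'' cannot stand: for $m=n=4$, $k=1$ the sum $A+A'$ is $\rho(e+f)$ acting on $V(4)$ and has rank $4$, not the $2$ predicted by the formula; and for $(m,n,k)=(10,11,1)$ one checks that $\rank(A+A')=11>10=m-k+1$ even though $3k\le 2m-n+1$, so the converse direction of your equivalence is false (harmless for the corollary, but it signals that the formula is being used outside its range of validity).

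The genuine gap is in the final case analysis of the forward direction. You need $m-k+1>(2m-n-2k+1)^++(m-2k+1)^+$. When both terms are positive this is equivalent to $3k>2m-n+1$, as you say; but when $2m-n-2k+1\le 0$ and $m-2k+1>0$ it reduces to $k>0$, which fails for $k=0$. Concretely, for $k=0$ and $n\ge 2m+2$ the hypothesis $0>2m-n+1$ holds, yet $A$ and $A'$ occupy disjoint sets of columns and $\rank(A+A')=m+1=m-k+1$, so your construction detects no rank drop. This cannot be repaired by choosing other elements: for $k=0$ the summand $V(n-m)\subset\Hom(V(m),V(n))$ is the image of multiplication of binary forms, $g\mapsto(f\mapsto gf)$, and every non-zero element is injective, so that summand is of constant rank and the statement is actually false in this edge case. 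For $k\ge 1$ your outline does close correctly once the case analysis is carried out honestly, and in that range it supplies the justification that the paper's one-line assertion omits.
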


In \cite{BFL2} it was shown that for $n\ge m$ and $n-m+2\mid m$, the summand
\begin{equation}\label{eq:sl_2 second summand}
	 V(n-m+2)\subset \Hom(V(m),V(n))
\end{equation}
is of \emph{constant} corank one (this is precisely what our main \cref{T:main theorem} gives for the group $\SL_2$).

Using computer calculations, we have not found examples of corank at least 2 for integers $n,m\le 50$. 
This suggests that the only spaces of matrices of constant rank arising from irreducible representations of $\SL_2$ are those from \eqref{eq:sl_2 second summand} and their transposes.

\subsection{More on vector bundles}
Given a constant rank space of matrices $\varphi\colon V\hookrightarrow\Hom(V_1,V_2)$ it was explained in the introduction that it yields a morphism of vector bundles 
\[ \tilde\varphi\colon V_1\otimes\cO_{\bP(V)}\to V_2\otimes\cO_{\bP(V)}(1).\]
Denote by $\cK$ the kernel of $\tilde\varphi$, whose rank is easily computed from the dimensions of $V$, $V_1$, $V_2$ and the rank of $\tilde\varphi$.

In the case of $\SL_2$-equivariant spaces of corank one, it was shown in \cite{BFL2} that the kernel bundle is of rank $N-1$ on $\bP^N\simeq \bP(V)$ and moreover that it is indecomposable, stable and distinct from all the previously known vector bundles on $\bP^N$ of rank $N-1$.

\medskip
It is tempting to try to generalize this to other groups.
In the situation described in this paper (equivariant of corank one),
the kernel bundle is of rank bigger than $N$ if $G$ is not $\SL_2$. 
However, its dual
$\cK^\vee$ is generated by global sections, so by a classical lemma of
Serre (see \cite[Lemma 4.3.1]{OSS}) it contains a trivial sub-vector bundle whose cokernel $\overline{\cK^\vee}$ is a vector bundle of rank $N$ on $\bP^N$ that is still globally generated.
If, moreover, the $N$th Chern class of $\overline{\cK^\vee}$ is zero, then it contains a trivial line subbundle giving thus rise to a rank $N-1$ vector bundle on $\bP^N$.
The Chern classes of $\cK^\vee$ and of $\overline{\cK^\vee}$ are equal and can be computed using the Weyl character formula for irreducible representations. For corank one, the vanishing of the above $N$th Chern class happens precisely in two situations: 
\begin{itemize}
	\item representations of $SL_2$, in which case we already knew the vanishing because $\cK$ is actually of rank $N-1$, see \cite{BFL2},
	\item $G$ is arbitrary but $\mu=\nu$. Then, the corresponding space of matrices of corank one comes by restriction from the $\GL(V)$-equivariant space 
	\[ V\mapsto \Hom(V, \Lambda^2 V)\]
	where $V=V(\nu)=V(\mu)$. In this case, the above construction recovers the Tango bundle (see, for example, \cite[Theorem 4.3.3]{OSS}).
\end{itemize}
	
\bibliographystyle{amsalpha}
\bibliography{corank1spmatr.clean}	
\Addresses

\end{document}